\def\n1LXWo{{L^2_{X_0(0);W_0}(0,T;\mathbb{R}^{n_1})}}
\def\mbA{\mathbold{A}}
\def\mbB{\mathbold{B}}
\def\mbD{\mathbold{D}}
\def\mbP{\mathbold{P}}
\def\mbx{\mathbold{x}}
\def\mbQ{\mathbold{Q}}
\def\mbS{\mathbold{S}}
\def\mbK{\mathbold{K}}
\newtheorem{theorem}{Theorem}
\newtheorem{definition}[theorem]{Definition}
\newtheorem{lemma}[theorem]{Lemma}
\newtheorem{remark}{Remark}
\def\proof{{\it Proof}.\quad}
\def\endproof{\hfill$\Box$}
\def\re{\eqref}
\begin{document}

\vspace{-0.3cm}
\title{\LARGE\bf Linear-Quadratic Mean Field Games with a Major Player: Nash certainty equivalence versus master equations\thanks{In a special issue of {\it Communications in Information and Systems}
in honor of  Professor Tyrone Duncan on the occasion of his 80th birthday.
Submitted May 27, 2020; revised September 27, 2020. This version: June 13, 2021.
 }
}

\vspace{1cm}

\author{Minyi Huang\thanks{M. Huang is with the School of
Mathematics and Statistics, Carleton University, Ottawa, ON K1S 5B6,
Canada (mhuang@math.carleton.ca). This work was supported in part  by Natural Sciences and Engineering
Research Council (NSERC) of Canada under a Discovery Grant.
The author would like to thank Prof. P. E. Caines and Dr. D. Firoozi for helpful discussions.}
     }

\date{}

\maketitle

\begin{abstract}
Mean field games with a major player were introduced in \cite{H10} within a linear-quadratic (LQ) modeling framework. Due to the rich structure of  major-minor player models, the past ten years have seen  significant research efforts for different solution notions and  analytical techniques. For LQ models,
we address the relation between three solution frameworks: the Nash certainty equivalence (NCE) approach in \cite{H10},  master equations, and asymptotic solvability, which  have been  developed starting with different ideas. We establish their equivalence relationships.
\end{abstract}

\begin{center}
\begin{minipage}{15.3cm}
{\bf Keywords:} Mean field games, linear-quadratic, major and minor players,
Nash certainty equivalence, master equations, asymptotic solvability.
\end{minipage}
\end{center}

%\begin{center}
%\begin{minipage}{15.3cm}
%{\bf Abbreviated title:} LQ mean field games with a major player
%\end{minipage}
%\end{center}

\section{Introduction}

Mean field game theory was first developed for a population of
comparably small but possibly heterogeneous players; see an overview in \cite{CHM17}.
A generalization of this theory is to consider
a major player interacting with a large number of minor players
as initially introduced in \cite{H10}. Historically, games with major and minor players have been  studied in the literature, but usually for static cooperative
decision models (see e.g. \cite{H73,MS78}).

When a major player interacts with a large number of minor players,
a crucial  feature  is that the
mean field generated by the minor players is a random process even if the
population size of the minor players tends to infinity.
This necessitates the characterization  of the dynamic property of the mean field when the players choose their strategies based on mean field approximations.
This analytical complication is similar to what happens in common noise models \cite{BFY13,C12,CD18}.

Following the major-minor player mean field game model introduced in  \cite{H10},  the same LQ framework has been greatly extended by different authors.    The reader is referred to
\cite{NH12} for non-uniform minor players, \cite{CK17,FC15} for partial information,
\cite{HJN19} for an application to optimal execution in finance, \cite{HWW16} for system dynamics via backward stochastic differential equations,
\cite{KP15} for random entrance of agents, \cite{MH20} for multi-scale analysis and  the notion of asymptotic solvability, and \cite{FJC18} for a convex analysis approach.
Meanwhile, the study of major-minor players in nonlinear models can be found in \cite{BCY16,BFY13,BLP14,CW17,CZ16,NC13}. Nourian and Caines \cite{NC13}
 treat the mean field as a random measure flow driven by the major player's Brownian motion and use stochastic Hamilton-Jacobi-Bellman (HJB) equations to solve the best responses. Bensoussan et al \cite{BCY16} use stochastic adjoint equations to handle a pair of optimal control problems with
a dominating player.
 Carmona and Zhu \cite{CZ16} apply a
stochastic  maximum principle under a conditional law of the minor player.
Sen and Caines \cite{SC16} consider partial information
and control with nonlinear filtering.
Lasry and Lions \cite{LL18} introduce master equations for a nonlinear major-minor player model.
  Cardaliaguet et al \cite{CCP18} analyze
a convergence problem for a major-minor player model with nonlinear cost integrands, and  show that the pair of master equations can be obtained as the limit of the HJB equations for the $N+1$ players as $N\to \infty$.

The analysis of leadership or Stackelberg equilibria can be found in \cite{WZ14} for a discrete time model, \cite{BCL17,MB18} for LQ Stackelberg games,
\cite{BCY15} for control delay,
 \cite{HY20} for state feedback and time-consistency, \cite{EMP19} for mean field principal-agent problems, \cite{FH18} for an application to optimal portfolio liquidation, and \cite{K17} for evolutionary inspection games under a major player's pressure.

For  LQ mean field games, the current literature has provided several routes:  The Nash certainty equivalence (NCE) approach,  master equations, and asymptotic solvability. These solution methods start with  different  ideas although they all look for a certain Nash equilibrium with state feedback information.

The NCE approach was initially developed for mean field games with all comparably small players. One starts by considering an infinite population  and fixing the mean field \cite{HCM07,HMC06}. Next a special stochastic optimal control problem for a representative agent is solved by finding its best response strategy  with respect to the given mean field and subsequently letting the overall population implementing such strategies regenerate the same mean field, leading to a fixed point problem. The set of strategies obtained  is an $\varepsilon$-Nash equilibrium for the  finite population.

When extending the NCE approach in \cite{HMC06,HCM07} to the major player model, the  method in \cite{H10} is to augment the state space by an extra state describing the mean field  generated by different types of minor players. This Markovianizes the decision problems
to be solved by the major player and a representative minor player.
 A key step is to assume a linear structure of the mean field  dynamics and further  find appropriate constraints on its parameters by imposing consistency conditions.

 The master equation approach assigns the player in question with its own value function and next determines the optimization rule of all players. The mean field evolution is naturally determined by system dynamics under the chosen  strategies.

The notion of asymptotic solvability attempts to understand the large population decision problem from a different point of view. It shares similarity to convergence problems in \cite{CDLL15,F17,L16}.
 One can formally start to write the dynamic programming equations for a sequence of Nash games with population sizes tending to infinity. If for all sufficiently large populations, the game  has a solution via the coupled Riccati equations
and moreover, the Riccati equation solutions maintain certain boundedness properties, we say the sequence of games has asymptotic solvability. A basic question is  how to characterize asymptotic solvability in terms of some low dimensional structure that captures all essential information of the model. This question has been answered for LQ mean field games   in \cite{HZ20} without a major player and in \cite{MH20} with a major player.

When taking the approaches of
Nash certainty equivalence or master equations or asymptotic solvability, one typically obtains seemingly different solution structures.
 We are interested in studying the relationship between these solution equations,
 and their uniqueness properties.
This will shed light into the intrinsic nature  of these solutions and will be especially valuable when
 many solution formulations via different information or interaction patterns
exist for major-minor player models
and when non-uniqueness results of mean field games  have been frequently seen in the literature \cite{BF17,BZ20,GNP18,HL19,HZ20,T18}.
The main contribution of the paper is to show that the NCE equation system is equivalent to the master equations when the latter are restricted to quadratic solutions, and that for homogeneous minor players, the NCE equation system has a solution if and only if asymptotic solvability holds.  In contrast, for LQ mean field games without a major player, the NCE equation system via consistent mean field approximations may have a solution but asymptotic solvability fails \cite{HZ20}.

The organization of the paper is as follows.
Section \ref{sec:mod} presents a summary of the NCE approach adopted in \cite{H10}. Section \ref{sec:me} introduces the master equations for the LQ mean field games with a major player and $K$ types of minor players and compares with the NCE equation system. Section \ref{sec:as} analyzes the special case of homogeneous minor players; it first reviews the asymptotic solvability
problem in \cite{MH20} and  next   shows an equivalence relationship between the solution of  the NCE approach and asymptotic solvability. Section \ref{sec:con} concludes the paper.

{\it Notation:}
We use  $(\Omega, {\mathcal F}, \{{\mathcal F}_t\}_{t\ge 0}, P)$ to denote an underlying filtered probability space.
Let $S^n$ be  the set of $n\times n$ real and symmetric matrices,  $S_+^n$ its subset of
positive semi-definite matrices, and $I_k$ the $k\times k$ identity matrix.
 Let ${\mathcal P}_2(\mathbb{R}^n)$ be the set of probability measures $\mu$ on $\mathbb{R}^n$  that have finite second moment. Denote $\langle y\rangle_\mu=\int_{\mathbb{R}^n} y \mu(dy)$ for the probability measure $\mu$.
Given a symmetric matrix $M\geq 0$, the quadratic form $z^TMz$ may
be denoted as $|z|_M^2$.
  For  $ Z=(z_{jk}) \in \mathbb{R}^{l\times m}$, denote the $l_1$-norm $\| Z\|_{l_1}=\sum_{j,k}|z_{jk}|$.
 For matrices $A=(a_{ij})\in \mathbb{R}^{l_1\times l_2}$, $\hat A\in \mathbb{R}^{l_3 \times l_4}$, the Kronecker product
 $A\otimes \hat A= (a_{ij} \hat A)_{1\le i\le l_1,1\le j\le l_2}\in
 \mathbb{R}^{(l_1l_3)\times (l_2 l_4)}$.

\section{The LQ Mean Field Game with a Major Player}
\label{sec:mod}

This section summarizes the methodology developed in \cite{H10} which considered an infinite horizon. The model below takes a finite horizon  for convenience of later comparison with  master equations and asymptotic solvability by analyzing ordinary differential equations (ODEs). The approach in \cite{H10} is applied  in a straightforward manner.

We consider the LQ game with a major player ${\mathcal A}_0$
and $N$
minor players ${\mathcal A}_i$,  $1\le i\le N$.
The states of ${\mathcal A}_0$ and ${\mathcal A}_i$ are, respectively,
denoted by $X_0(t)$ and $X_i(t)$,  $1\le i\le N$, which satisfy the linear stochastic
differential equations (SDEs):
\begin{align}
dX_0(t)=\  &\big(A_0X_0(t)+B_0u_0(t)+F_0X^{(N)}(t)\big)dt
 %\nonumber \\&
 +D_0dW_0(t),\label{maj}\\
dX_i(t)=\  &\big(A_{\theta_i}X_i(t)+Bu_i(t)+FX^{(N)}(t)+GX_0(t)\big)dt
%\nonumber \\&
+DdW_i(t),\label{min} \\
& 1\le i\le N,\quad t\ge 0,  \nonumber
\end{align}
where we have state  $X_j\in \mathbb{R}^n$, control
$u_j\in\mathbb{R}^{n_1}$, and
 $ X^{(N)}=\frac{1}{N}\sum_{k=1}^N X_k(t)$.
 The initial states $\{X_j(0), 0\le j\le N\}$ are independent with $EX_j(0)=x_j(0)$ and finite second moment. The $N+1$ standard  $n_2$-dimensional Brownian motions $\{W_j, 0\le j\le N\}$ are independent and also independent of the  initial states.
 The subscript $\theta_i$  is a dynamic parameter to model a heterogenous population of  minor players.
We assume $\theta_i$ takes values from the finite set $\Theta=\{1,\ldots, K\}$ modeling $K$ types of minor players. If $\theta_i=k$, ${\mathcal A}_i$ is called a $k$-type minor player.
 The  constant matrices
$A_0$, $B_0$, $F_0$, $D_0$, $A_k$,  $B$, $F$, $G$, $D$   have compatible
dimensions. Denote $u=(u_0, \cdots, u_N)$.
The costs of the players   are given by
\begin{align}
J_0(u) =\  & E\int_0^T e^{-\rho t}\left[  |X_0(t)-\Gamma_0 X^{(N)}(t)-\eta_0|_{Q_0}^2+|u_0(t)|_{R_0}^2\right]dt \nonumber\\
&+e^{-\rho T }E|X_0(T)-\Gamma_{0f} X^{(N)}(T)-\eta_{0f}|_{Q_{0f}}^2,\label{costJ0} \\
J_i(u) =\ &  E\int_0^Te^{-\rho t} \left[  |X_i(t)-\Gamma_1 X_0(t)-\Gamma_2 X^{(N)}(t)-\eta|_{Q}^2+|u_i(t)|^2_R \right]dt\nonumber\\
& +e^{-\rho T}E|X_i(T)-\Gamma_{1f} X_0(T)-\Gamma_{2f} X^{(N)}(T)-\eta_f|_{Q_f}^2,
\qquad  1\le i\le N, \label{costJi}
%& \nonumber
\end{align}
where $\rho\ge 0$ is a discount factor.
The  constant matrices (or vectors)
 $Q_0$, $\Gamma_0$, $\eta_0$, $R_0$, $Q_{0f}$, $\Gamma_{0f}$, $\eta_{0f}$, $Q$, $\Gamma_1$, $\Gamma_2$, $\eta$, $R$, $Q_{f}$, $\Gamma_{1f}$, $\Gamma_{2f}$, $\eta_f$ above
have compatible dimensions, and $Q_{0}\geq 0$, $Q_{0f}\geq 0$, $Q\geq 0$, $Q_f\geq 0$, $R_0>0$, $R>0$.
 Our analysis can be easily extended to
the case of time-dependent parameters.  We only take $A_{(\cdot)}$ in \eqref{min}  to be dependent on $\theta_i$ for the purpose of notational simplicity. When other parameters for ${\mathcal A}_i$ also depend on $\theta_i$, the analysis is similar.

For  given $N$, define
${\mathcal I}_k=\{i|\theta_i=k, 1\leq i\leq N\}$,
$N_k=|{\mathcal I}_k|$,
 where $|{\mathcal I}_k|$ is the cardinality of  ${\mathcal I}_k$, $1\leq k\leq K$.
Let  $ \pi_k^{(N)}=N_k/N$. Then $\pi^{(N)}=(\pi_1^{(N)}, \cdots, \pi_K^{(N)})$ is  the empirical distribution of $\theta_1,\cdots , \theta_N$.
We make the following assumptions.

({\bf A1}) There exists a  probability vector $\pi$ such that
 $\lim_{N\rightarrow \infty}\pi^{(N)}= \pi$,
where $\pi=(\pi_1,\cdots, \pi_K)$ and  $\min_{1\leq k\leq K}\pi_k>0$.

({\bf A2}) The initial states  $X_j(0)$, $0\leq j\leq N$, are independent, $EX_i(0)=\alpha_0$  for all $i\geq 1$, and there exists a fixed constant $c_0$  such that $\sup_{j\geq 0} E|X_j(0)|^2\leq c_0$.

\subsection{The limiting two-player  model}
\label{sec:lim}

Below we overview the steps  in \cite{H10}.
The key idea is to introduce  a new process $\bar Z(t)=[\bar Z_1^T(t), \cdots, \bar Z_K^T(t)]^T$ as a state component in an augmented state space,  where $\bar Z_k$ specifies the mean field generated by all $k$-type minor players.
Given the major player's state $\bar X_0$,  the process $\bar Z$ is described by the following equation
\begin{align}
d\bar Z(t)=[\overline A (t)\bar Z(t)  +\overline G(t) \bar X_0(t)  + \overline m (t)] dt,  \label{ztg}
\end{align}
where $\bar Z_k(0)=\alpha_0$, and  $\overline A(t)\in \mathbb{R}^{nK\times nK}$, $\overline G(t)\in {\mathbb R}^{nK\times n}$, and
$\overline m(t)$ are  continuous matrix or vector valued functions on $[0,T]$. The  initial condition for \eqref{ztg} is due to the  initial mean for the minor players as specified in ({\bf A2}). The triple $(\overline A, \overline G, \overline m)$
is not known in advance and  needs to be determined as part of the solution.

After replacing  $X^{(N)}$   in \re{maj}--\re{costJi} by $\sum_{k=1}^K \pi_k \bar Z_k$, we introduce two limiting optimal control problems for the major player and a representative minor player.
Let $\bar{\mathcal A}_0$ and $\bar{\mathcal A}_i$  stand for the two players.

Problem (P1):
The major player $\bar{\mathcal A}_0$ with state $\bar X_0$  has dynamics and cost
\begin{align}
& d\bar X_0 = \Big[A_0 \bar X_0+B_0 u_0+ F_0 \sum_{k=1}^K \pi_k \bar Z_k \Big]dt   +D_0 dW_0,  \nonumber \\ % \label{majl}\\
&d\bar Z=(\overline A \bar Z  +\overline G \bar X_0  + \overline m  )dt,
\qquad\qquad t\geq 0, \nonumber \\
& \bar J_0(u_0(\cdot))
=   E\int_0^T e^{-\rho t}\Big\{ \Big|\bar X_0- \Gamma_0
\sum_{k=1}^K \pi_k\bar Z_k-\eta_0  \Big|_{Q_0}^2
 + u_0^T R_0 u_0 \Big\}dt, \nonumber \\
 &\qquad \qquad \qquad+e^{-\rho T }E\Big|\bar X_0(T)-\Gamma_{0f}\sum_{k=1}^K \pi_k \bar Z_k (T)-\eta_{0f}\Big|_{Q_{0f}}^2, \nonumber
\end{align}
where  $\bar X_0(0)=X_0(0)$ and $\bar Z_k(0)= \alpha_0$.

Problem (P2):
 The minor player $\bar{\mathcal A}_i$ with state $\bar X_i$ has dynamics and cost
{\allowdisplaybreaks
\begin{align}
&d\bar X_i=  \Big[A_{\theta_i} \bar X_i +Bu_i + F\sum_{k=1}^K \pi_k \bar Z_k +  G  \bar X_0 \Big] dt
        +  DdW_i,   \label{minl} \\
 & d\bar X_0 = \Big[A_0 \bar X_0+B_0 \hat u_0+ F_0 \sum_{k=1}^K \pi_k \bar Z_k \Big]dt   +D_0 dW_0, \notag  \\
&d\bar Z=(\overline A \bar Z  +\overline G \bar X_0  + \overline m )
dt, \qquad\qquad t\geq 0,\nonumber \\
   \bar J_i(u_i(\cdot), \hat u_0(\cdot))=&
 E\int_0^T e^{-\rho t}\Big\{\Big|\bar X_i-\Gamma_1 \bar X_0- \Gamma_2\sum_{k=1}^K \pi_k\bar Z_k  -\eta\Big|_Q^2
  + u_i^T R u_i \Big\}dt, \nonumber\\
  & +e^{-\rho T}E\Big|\bar X_i(T)-\Gamma_{1f} \bar X_0(T)-\Gamma_{2f} \sum_{k=1}^K \pi_k \bar Z_k(T)-\eta_f\Big|_{Q_f}^2, \nonumber
\end{align}}
where $\bar X_i(0)=X_i(0) $,  $\bar X_0(0)=X_0(0) $, $\bar Z_k(0)=\alpha_0 $, and  $\hat u_0$ is the optimal control law solved from (P1).

To distinguish from the original model with $N+1$ players, we use the new state variables $\bar X_0$ and $\bar X_i$.  But we still reuse the same set of  variables  $u_0$, $u_i$, $W_0$ and $W_i$  in this infinite population limit model without the  risk of confusion.

\subsection{Nash certainty equivalence solution }
\label{sec:sub:nce}

Step 1: Solution of Problem (P1)

We start by assuming that $(\overline A,\overline G,\overline m)$ has been known.
Define
\begin{align*}
&F_0^\pi= \pi \otimes F_0, \quad
\Gamma_0^\pi = \pi \otimes \Gamma_0,\quad \Gamma_{0f}^\pi = \pi \otimes \Gamma_{0f},
\quad    \\
&\mathbb{A}_0(t)=\left[
\begin{array}{cc}
A_0 & F_0^\pi\\
\overline G & \overline A
\end{array}\right],
\quad
\mathbb{B}_0=\left[
\begin{array}{c}
B_0\\
0_{nK\times n_1}
\end{array}\right],
\quad
  \mathbb{M}_0(t)=\left[
\begin{array}{c}
0_{n\times 1}\\
\overline m
\end{array}\right],\nonumber \\
&{\mathbb Q}_0^\pi=[I, -\Gamma_0^\pi]^T Q_0 [I, -\Gamma_0^\pi],\qquad \bar \eta_0^\pi= [I,-\Gamma_0^\pi]^T Q_0  \eta_0,
\nonumber \\
& {\mathbb Q}_{0f}^\pi=[I, -\Gamma_{0f}^\pi]^T Q_{0f} [I, -\Gamma_{0f}^\pi],\qquad \bar \eta_{0f}^\pi= [I,-\Gamma_{0f}^\pi]^T Q_{0f}  \eta_{0f}.
\end{align*}

We introduce the ODE system
\begin{align}
&\rho P_0=\frac{dP_0}{dt}+ P_0\mathbb{A}_0+\mathbb{A}^T_0 P_0 -P_0 \mathbb{B}_0 R_0^{-1}\mathbb{B}_0^T P_0 +{\mathbb Q}_0^\pi,\label{riccatip0}\\
&\rho s_0= \frac{ds_0}{dt} +(\mathbb{A}_0^T-P_0\mathbb{B}_0 R_0^{-1} \mathbb{B}_0^T) s_0+P_0 \mathbb{M}_0-\bar\eta_0^\pi, \label{odes0}
\end{align}
where  $ P_0(T)= {\mathbb  Q}_{0f}^\pi $ and  $s_0(T)=- \bar \eta^\pi_{0f}$.
The ODE system has a unique solution.
 The optimal control law for $\bar{\mathcal A}_0$ is
\begin{align}
\hat u_0=-R_0^{-1} \mathbb{B}_0^T \left[P_0 (\bar X_0^T, \bar Z^T)^T+s_0\right]. \nonumber
\end{align}

Step 2: Solution of Problem (P2)

We suppose $\bar {\mathcal A}_i$ has its dynamic parameter $\theta_i=\kappa$
so that $A_{\theta_i}=A_\kappa$. Denote $F^\pi=\pi\otimes F$.
After taking the feedback  control law
$\hat u_0$ for $\bar {\mathcal A}_0$, we have

\begin{align}
d\left[
\begin{array}{c}
\bar X_i \\
\bar X_0\\
\bar Z
\end{array}
\right]&=    \left[
\begin{array}{cc}
A_\kappa & [G \ \ \ F^\pi]\\
0_{n(K+1)\times n} & \mathbb{A}_0-\mathbb{B}_0 R_0^{-1} \mathbb{B}_0^T P_0
\end{array}
\right]\left[
\begin{array}{c}
\bar X_i \\
\bar X_0\\
\bar Z
\end{array}
\right]dt\nonumber\\
&\quad+\left[\!
\begin{array}{c}
B\\
0_{n(K+1)\times n_1}
\end{array}
\!\right] u_i dt
%\nonumber\\ &
  +\left[\!
\begin{array}{c}
0_{n\times 1}\\
\mathbb{M}_0 -\mathbb{B}_0R_0^{-1} \mathbb{B}_0^T s_0
\end{array}
\!\right]dt+
\left[\!
\begin{array}{c}
DdW_i \\
D_0 dW_0\\
0_{nK\times 1}
\end{array}
\!\right],  \nonumber
\end{align}
where $\bar X_i(0)=X_i(0)$, $\bar X_0(0)=X_0(0)$, $\bar Z(0)=\alpha_0$ and   $P_0$ is solved from \re{riccatip0}.
Define
\begin{align*}
&{\mathbb{A}}_{\kappa}(t)= \left[
\begin{array}{cc}
A_\kappa & [G \ \ \ F^\pi]\\
0_{n(K+1)\times n} & \mathbb{A}_0-\mathbb{B}_0 R_0^{-1} \mathbb{B}_0^T P_0
\end{array}
\right], \quad \mathbb{B}=\left[
\begin{array}{c}
B\\
0_{n(K+1)\times n_1}
\end{array}
\right] ,
\nonumber \\
& {\mathbb{M}}(t)= \left[
\begin{array}{c}
0_{n\times 1}\\
\mathbb{M}_0-\mathbb{B}_0R_0^{-1} \mathbb{B}_0^T s_0
\end{array}
\right],\quad \Gamma_2^\pi=\pi\otimes \Gamma_2,\quad \Gamma_{2f}^\pi=\pi\otimes \Gamma_{2f}, \\ % \label{defam}\\
 &  {\mathbb  Q}^\pi= [I, -\Gamma_1, -\Gamma_2^\pi]^T Q[I, -\Gamma_1, -\Gamma_2^\pi], \quad \bar\eta^\pi= [I, -\Gamma_1, -\Gamma_2^\pi]^TQ  \eta, \nonumber\\
& {\mathbb Q}^\pi_f= [I, -\Gamma_{1f}, -\Gamma_{2f}^\pi]^T Q_f[I, -\Gamma_{1f}, -\Gamma_{2f}^\pi], \quad \bar\eta_f^\pi= [I, -\Gamma_{1f}, -\Gamma_{2f}^\pi]^TQ_f  \eta_f.
\end{align*}

We introduce
\begin{align}
&\rho P_{\kappa} =\frac{d P_\kappa}{dt} +P_{\kappa}{\mathbb{A}}_{\kappa} +{\mathbb{A}}_{\kappa}^T
P_{\kappa} - P_{\kappa} {\mathbb{B}} R^{-1}{\mathbb{B}}^T P_{\kappa} +{\mathbb Q}^\pi, \label{Pk5}\\
&\rho  s_\kappa= \frac{d s_\kappa}{dt} +\left(\mathbb{A}_{\kappa}^T-P_{\kappa} {\mathbb{B}} R^{-1} {\mathbb{B}}^T\right)  s_\kappa+P_{\kappa}{\mathbb{M}}-\bar\eta^\pi, \label{sk5}
\end{align}
where $P_\kappa(T)={\mathbb Q}^\pi_f $ and  $s_\kappa(T)=  -\bar\eta_f^\pi$.
The optimal control law for $\bar{\mathcal A}_i$ is  given by
\begin{align}
\hat u_i= -R^{-1} {\mathbb{B}}^T \left[P_{\kappa} (\bar X_i^T, \bar X_0^T, \bar Z^T)^T+  s_\kappa\right]. \label{mincontrol}
\end{align}

Finally, substituting  \re{mincontrol} into  \re{minl} gives
\begin{align}
d\bar X_i =\ & (A_\kappa \bar X_i + G \bar X_0 + F^\pi \bar Z) dt -BR^{-1} {\mathbb{B}}^T P_{\kappa} (\bar X_i^T, \bar X_0^T, \bar Z^T)^T dt\nonumber\\
&-   BR^{-1} {\mathbb{B}}^T s_\kappa dt +D dW_i, \label{xico}
\end{align}
where $\bar X_i(0)=X_i(0)$.

{Step 3: The consistency  condition}

For the matrices $P_\kappa$, $\kappa=1,\cdots, K$,  we introduce the
partition
\begin{align}
P_\kappa=(P_{\kappa,jl})_{1\le j,l\le 3},
%\left[
%\begin{array}{ccc}
%P_{\kappa,11}& P_{\kappa,12} &P_{\kappa,13}\\
%P_{\kappa,21}& P_{\kappa,22} &P_{\kappa,23}\\
%P_{\kappa,31}& P_{\kappa,32} &P_{\kappa,33}
%\end{array}
%\right]
 \label{Ppart}
\end{align}
where $P_{\kappa,11}$, $P_{\kappa,22}\in \mathbb{R}^{n\times n}$,  and $ P_{\kappa,33}\in \mathbb{R}^{nK\times nK}$.
The matrix  functions $\overline A(t)$, $\overline G(t)$ and
vector  function $\overline m(t)$ are represented  in the form
\begin{align}
\overline A(t)=\left[
\begin{array}{c}
\overline A_1\\
\vdots\\
\overline A_K
\end{array}\right], \qquad  \overline G(t)=\left[
\begin{array}{c}
\overline G_1\\
\vdots\\
\overline G_K
\end{array}\right],  \qquad  \overline m(t)=\left[
\begin{array}{c}
\overline m_1\\
\vdots\\
\overline m_K
\end{array}\right], \label{xiAG}
\end{align}
where  $\overline A_k(t)\in \mathbb{R}^{n\times nK}$, $\overline G_k(t)\in \mathbb{R}^{n\times n}$ and  $\overline m_k(t)\in \mathbb{R}^{n}$ for $1\leq k\leq K$.
Denote %the $n\times nK$ matrix
\begin{align}
{\bf e}_k=[0_{n\times n}, \cdots, 0_{n\times n}, I_{n}, 0_{n\times n},\cdots, 0_{n\times n}]\in \mathbb{R}^{n\times nK}, \label{eI}
 \end{align}
 where the identity matrix $I_{n} $ is at the $k$th block, $1\leq k\leq K$.
Now we consider the  average state $(1/N_\kappa) \sum_{i\in {\mathcal I}_\kappa} X_i$  of  $N_\kappa$ $\kappa$-type minor players with  closed-loop dynamics of the form \re{xico}. When $N\rightarrow \infty$ so that $N_\kappa \rightarrow  \infty$, the limit of the state average is required to regenerate  $\bar Z_\kappa$ (which has been intended for the approximation of $(1/N_\kappa) \sum_{i\in {\mathcal I}_\kappa} X_i$) such that
\begin{align}
d\bar Z_\kappa =\
           &  \left\{\left[A_\kappa- BR^{-1} B^T P_{\kappa, 11}\right]{\bf e}_\kappa
          +F^\pi- BR^{-1}B^TP_{\kappa, 13}\right\}\bar Zdt\nonumber\\
& + (G-BR^{-1}B^T P_{\kappa, 12})\bar X_0dt
- BR^{-1} \mathbb{B}^T s_\kappa dt, \label{zkco}
\end{align}
where $\bar Z=[\bar Z_1^T,\cdots, \bar Z_K^T]^T$ and $\bar Z_\kappa(0)=\alpha_0$.
Now, under the  NCE methodology, the resulting equation system \re{zkco}  is required to coincide with \re{ztg} which had been presumed in the first place.  We call this requirement  the {\it consistency condition}.

\subsection{The Nash certainty equivalence equation system}

Based on Steps 1, 2 and 3, we
%Combining \re{riccatip0}, \re{Pk5},
 introduce the first differential-algebraic  system of equations (DAEs):
\begin{align}
\left\{
\begin{array}{l}
\rho P_0=\dot{P}_0+ P_0\mathbb{A}_0+\mathbb{A}_0^T P_0 -P_0 \mathbb{B}_0 R_0^{-1}\mathbb{B}_0^T P_0 +{\mathbb Q}_0^\pi,\\
\rho P_{\kappa} =\dot{P}_\kappa+ P_{\kappa} {\mathbb{A}}_{\kappa} +{\mathbb{A}}_{\kappa}^T
P_{\kappa} - P_{\kappa} {\mathbb{B}} R^{-1}{\mathbb{B}}^T P_{\kappa} +{\mathbb Q}^\pi,\quad
1\le \kappa\le K,\\
  \overline A_\kappa= (A_\kappa- BR^{-1} B^T P_{\kappa, 11}) {\bf e}_\kappa + F^\pi- BR^{-1} B^TP_{\kappa, 13} , \qquad \forall \kappa, \\
 \overline G_\kappa =G-BR^{-1} B^T P_{\kappa, 12} , \qquad \forall\kappa,
\end{array}\right. \label{conricK}
\end{align}
where $ P_0(T)= {\mathbb  Q}_{0f}^\pi $ and $P_\kappa(T)={\mathbb Q}^\pi_f$,
%Combining \re{odes0}, \re{sk5}, we introduce
and
the  second differential-algebraic system of equations (DAEs):
\begin{align}
\left\{
\begin{array}{l}
\rho s_0= \dot{s}_0 +(\mathbb{A}_0^T-P_0\mathbb{B}_0 R_0^{-1} \mathbb{B}_0^T) s_0+P_0 \mathbb{M}_0-\bar\eta_0^\pi,
\\
\rho  s_\kappa= \dot{s}_\kappa +(\mathbb{A}_{\kappa}^T-P_{\kappa} {\mathbb{B}} R^{-1} {\mathbb{B}}^T)  s_\kappa+P_{\kappa} {\mathbb{M}}-\bar\eta^\pi, \quad  1\le \kappa\le K,\\
\overline m_\kappa =-   BR^{-1} {\mathbb{B}}^T s_\kappa, \qquad \forall \kappa,
\end{array}\right.\label{conodeK}
\end{align}
where  $s_0(T)=- \bar \eta^\pi_{0f}$ and  $s_\kappa(T)=  -\bar\eta_f^\pi$.
 Note that $\overline m$ has been used in defining ${\mathbb M}_0$ and ${\mathbb{M}}$.
 The equality constraints for $\overline A_\kappa$, $\overline
 G_\kappa$ in \eqref{conricK} and $\overline m_\kappa$ in \eqref{conodeK} result from the consistency condition specified in Step 3.
The combined differential-algebraic  system of equations \re{conricK}--\re{conodeK} will be called the NCE  equation system.
The matrices ${\mathbb A}_0$ and ${\mathbb A}_\kappa$ now depend on the solution of the equation system. The solution of \eqref{conricK}, if it exists, can be solved without involving \eqref{conodeK}.

Denote
 \begin{align*}
 {\mathcal X}_P =\ & C([0,T]; \mathbb{R}^{n(K+1)\times n(K+1)})\times ( C([0,T]; \mathbb{R}^{n(K+2)\times n(K+2)}))^K\\
&\times C([0,T]; \mathbb{R}^{nK\times nK})\times C([0,T]; \mathbb{R}^{nK\times n}) , \\
{\mathcal X}_s=\ &  C([0,T]; \mathbb{R}^{n(K+1)})\times (C([0,T]; \mathbb{R}^{n(K+2)}))^K\times C([0,T]; \mathbb{R}^{nK}).
\end{align*}

\begin{definition}\label{def:sol}
A set of functions
$$(P_0, P_1, \cdots, P_K,\overline A, \overline G)\in {\mathcal X}_P,  \quad ( s_0, s_1,\cdots, s_K,\overline  m) \in  {\mathcal X}_s
$$
 satisfying \eqref{conricK}--\eqref{conodeK} on $[0,T]$ is called a solution of  the NCE equation system.
\end{definition}

\begin{lemma}\label{lemma:NCE}
We have the following assertions:

{\emph {i)}} If $(P_0, P_1,\cdots, P_K, \overline A, \overline G)\in {\mathcal X}_P$ is a solution of \eqref{conricK}, then it is unique and
 $P_0\in C([0,T]; S^{n(K+1)}_+) $, $P_\kappa\in C([0,T]; S^{n(K+1)}_+) $ for each $1\le \kappa\le K$.

{\emph {ii)}} The NCE equation system has a solution if and only if
\eqref{conricK} has a solution.

{\emph{iii)}} If the NCE equation system has a solution in ${\mathcal X}_P\times {\mathcal X}_s$, it is unique.

\end{lemma}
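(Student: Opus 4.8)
The plan is to establish (i), (ii), (iii) in turn, exploiting the remark after \eqref{conodeK} that \eqref{conricK} can be solved on its own: the unknowns $(P_0,P_1,\dots,P_K,\overline A,\overline G)$ never depend on $(s_0,\dots,s_K,\overline m)$. For (i) I would first eliminate the algebraic unknowns. The last two lines of \eqref{conricK} already give each block $\overline A_\kappa,\overline G_\kappa$ explicitly as an affine function of the submatrices $P_{\kappa,11},P_{\kappa,12},P_{\kappa,13}$; assembling them as in \eqref{xiAG} makes $\overline A,\overline G$, and hence $\mathbb A_0$, affine in $(P_1,\dots,P_K)$, and then $\mathbb A_\kappa$ affine in $(P_0,P_1,\dots,P_K)$. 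Inserting these expressions into the two Riccati lines of \eqref{conricK} removes $\overline A,\overline G$ and turns the DAE into a closed, coupled system of matrix ODEs $\dot Y=\mathcal F(t,Y)$ for $Y=(P_0,P_1,\dots,P_K)$, with fixed terminal data $P_0(T)=\mathbb Q_{0f}^\pi$ and $P_\kappa(T)=\mathbb Q_f^\pi$. Here $\mathcal F$ is continuous in $t$ and a degree-two polynomial in the entries of $Y$ (the Riccati terms are quadratic, while the products $P_0\mathbb A_0$ and $P_\kappa\mathbb A_\kappa$ are bilinear), hence locally Lipschitz in $Y$. Any solution in $\mathcal X_P$ is a continuous, and therefore bounded, solution of this terminal-value problem on $[0,T]$; uniqueness then follows from a Grönwall estimate, since two such solutions remain in a common bounded ball on which $\mathcal F$ is Lipschitz and their difference vanishes at $t=T$.

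Once uniqueness pins down $\overline A,\overline G$, the coefficients $\mathbb A_0(\cdot)$ and $\mathbb A_\kappa(\cdot)$ are fixed continuous matrix functions, so each of $P_0$ and $P_\kappa$ now solves a single standard Riccati equation with positive control weight ($R_0>0$, resp. $R>0$) and positive-semidefinite running and terminal cost matrices: indeed $\mathbb Q_0^\pi,\mathbb Q_{0f}^\pi\ge 0$ and $\mathbb Q^\pi,\mathbb Q_f^\pi\ge 0$ because $Q_0,Q_{0f},Q,Q_f\ge 0$. Absorbing the discount by replacing the drift $\mathbb A_0$ with $\mathbb A_0-\tfrac{\rho}{2}I$ (and $\mathbb A_\kappa$ with $\mathbb A_\kappa-\tfrac{\rho}{2}I$) converts these into classical undiscounted Riccati equations, whose solutions are positive semidefinite; equivalently, the homogeneous versions of Problems (P1) and (P2) (with $\eta_0,\eta,\overline m$ set to zero, which leaves $P_0,P_\kappa$ unchanged) have value functions $e^{-\rho t}\,\xi^T P_0(t)\xi$ and $e^{-\rho t}\,\xi^T P_\kappa(t)\xi$ together with nonnegative costs. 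Either route yields the stated positivity of $P_0$ and $P_\kappa$.

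For (ii) the ``only if'' direction is immediate: the $(P,\overline A,\overline G)$-component of any NCE solution solves \eqref{conricK}. For the converse, assume \eqref{conricK} has a solution; by (i) the matrices $\mathbb A_0,\mathbb A_\kappa,P_0,P_\kappa$ are fixed continuous functions on $[0,T]$. I would then eliminate $\overline m$ via its defining relation $\overline m_\kappa=-BR^{-1}\mathbb B^T s_\kappa$, which makes $\mathbb M_0$ (whose nonzero block equals $\overline m$) affine in $(s_1,\dots,s_K)$ and hence $\mathbb M$ affine in $(s_0,\dots,s_K)$. Substituting into the two differential lines of \eqref{conodeK} produces a linear inhomogeneous ODE system for $(s_0,s_1,\dots,s_K)$ with continuous coefficients and prescribed terminal values $s_0(T)=-\bar\eta_{0f}^\pi$ and $s_\kappa(T)=-\bar\eta_f^\pi$. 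A linear terminal-value problem of this kind has a unique solution on all of $[0,T]$, and setting $\overline m_\kappa=-BR^{-1}\mathbb B^T s_\kappa$ completes a full NCE solution, establishing the ``if'' direction.

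Finally, (iii) is the combination of the two reductions: by (i) the $(P_0,\dots,P_K,\overline A,\overline G)$-part of an NCE solution is unique, and by the linear-ODE uniqueness used in (ii) the corresponding $(s_0,\dots,s_K,\overline m)$-part is then determined uniquely, so the whole solution is unique. The one genuinely nonroutine point is the uniqueness in (i); its crux is not the Grönwall inequality but the observation that the algebraic constraints of \eqref{conricK} are already solved for $\overline A_\kappa,\overline G_\kappa$ in terms of the blocks of $P_\kappa$. Although the dependence $P_\kappa\!\rightarrow\!\overline A,\overline G\!\rightarrow\!\mathbb A_0\!\rightarrow\!P_0\!\rightarrow\!\mathbb A_\kappa\!\rightarrow\!P_\kappa$ is cyclic rather than hierarchical, this explicit solvability lets one substitute the algebraic relations out and obtain a genuine coupled ODE $\dot Y=\mathcal F(t,Y)$ with smooth right-hand side, which is precisely what standard ODE uniqueness requires.
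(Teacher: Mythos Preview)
Your proposal is correct and follows essentially the same approach as the paper: eliminate $(\overline A,\overline G)$ using the algebraic constraints to obtain a closed ODE system with locally Lipschitz vector field, invoke ODE uniqueness, then freeze the resulting coefficients to identify each $P_0,P_\kappa$ as the solution of a standard Riccati equation (hence positive semidefinite), and finally reduce \eqref{conodeK} to a linear ODE system once $\overline m$ is substituted out. Your additional remarks about the polynomial degree of the vector field, the Gr\"onwall estimate, and the discount-absorption trick are helpful elaborations but do not change the underlying argument.
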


\proof i) By eliminating  $(\overline A, \overline G)$, we write $\mathbb{A}_0$ as a function of $(P_1, \cdots, P_K)$, and $\mathbb{A}_\kappa$ as  a function of
$(P_0, P_1, \cdots, P_K)$, and accordingly  denote
${\mathbb A}_0({P_1}, \cdots, P_K)$,  ${\mathbb A}_\kappa(P_0, P_1, \cdots, P_K)$. Next we write the ODEs of $(P_0, P_1, \cdots, P_K)$ where the vector field is a continuous  function of $(P_0, P_1, \cdots, P_K)$ with local Lipschitz continuity. Hence the
 solution $(P_0, P_1, \cdots, P_K)$ is unique, which further uniquely determines $(\overline A, \overline G)$. After solving $P_0(t)$ and $P_1(t), \cdots, P_K(t)$, we denote
\begin{align}
 \hat {\mathbb A}_0(t)= {\mathbb A}_0 (P_1(t), \cdots, P_K(t)), \quad \hat {\mathbb A}_\kappa(t)={\mathbb A}_\kappa(P_0(t), P_1(t), \cdots, P_K(t)).\notag
\end{align}
Consider the standard Riccati ODE
\begin{align}
\rho \hat P_0= \dot {\hat P}_0+ \hat P_0
\hat {\mathbb A}_0+\hat {\mathbb A}_0^T \hat P_0 -\hat P_0 \mathbb{B}_0 R_0^{-1}\mathbb{B}_0^T \hat P_0 +{\mathbb Q}_0^\pi, \qquad \hat P_0(T)= {\mathbb Q}_{0f}^\pi. \label{hRic}
\end{align}
We obtain a unique solution $\hat P_0\in C([0,T]; S^{n(K+1)}_+)$ \cite{S98}.
Since $P_0$ also satisfies \eqref{hRic}, we necessarily have
$\hat P_0=P_0$. It follows that $P_0\in C([0,T]; S^{n(K+1)}_+)$.
Similarly we obtain $P_\kappa\in C([0,T]; S^{n(K+2)}_+)$ for $1\le \kappa \le K$.

ii) Necessity is trivial. We show sufficiency.
After solving $(P_0, P_1, \cdots, P_K)$, we write ${\mathbb M}_0(s_1, \cdots, s_K)$ as a linear function of $s_1, \cdots, s_K$, and ${\mathbb M}(s_0, s_1, \cdots, s_K)$ as a linear function of $(s_0, s_1, \cdots, s_K)$. Then $(s_0, s_1, \cdots, s_K)$ is determined by a linear ODE system with time dependent coefficients and can be uniquely solved. We further solve $\overline m$.

iii) Suppose
$(P_0, P_1,\cdots, P_K \overline A, \overline G, s_0, s_1,\cdots, s_K,\overline  m)$ is a solution of the NCE equation system.  Let
$(P_0', P_1',\cdots, P_K', \overline A', \overline G', s_0', s_1',\cdots, s_K',\overline  m')$
be another solution. By part i), we have
$$(P_0, P_1,\cdots, P_K, \overline A, \overline G)= (P_0', P_1', \cdots, P_K',\overline A', \overline G').$$
 Consequently, $(s_0, s_1,\dots, s_K,\overline  m)  =( s_0', s_1',\cdots, s_K',\overline  m')$.
Uniqueness follows. \endproof

\begin{remark}
 When $T$ is replaced by $\infty$, the system specified by \re{maj}--\re{costJi} reduces to the model in \emph{\cite{H10}}, and
\eqref{conricK} is modified by replacing the two ODEs by two algebraic equations. For the infinite horizon case, \eqref{conodeK} can  be retained as  a differential-algebraic system of equations with no terminal condition, for which one can impose appropriate growth conditions on the solutions $(s_0, s_\kappa)$. The consideration of time-varying $s_0$, $s_\kappa$ means searching  in a larger space than constant solutions.
\end{remark}

\begin{remark}
 A class of LQ stochastic optimal control problems is solved in \emph{\cite{FJC18}} based on convex analysis and the G\^{a}teaux derivative of the cost; this approach is applied to solve the limiting optimal control problems of the LQ mean field game with a major player for both the finite and infinite horizon cases. This recovers both the NCE equation system obtained in \emph{\cite{H10}} and \eqref{conricK}--\eqref{conodeK}.
\end{remark}

\section{The Master Equations and Their Relation to the NCE Approach}
\label{sec:me}

Let $X_0^\dag(t)$ and $Z^\dag_\kappa(t)$, $1\le \kappa\le K$, be the states of the  major player and a minor player of type $\kappa$, respectively. Let $\{\mu^\dag_\kappa(t), 0\le t\le T\}$ denote the (random) measure flow as the mean field generated by all minor players of type $\kappa$.
Due to the correlation of the $\kappa$-type minor players' states, the
limit $\mu^\dag_\kappa(t)$  of their empirical distribution is still random except for trivial cases.

For $\mu= (\mu_1, \cdots, \mu_K) $, where
each $\mu_\kappa\in {\mathcal P}_2({\mathbb R}^n)$,  define the mean function
\begin{align}
\bar z_\kappa^\mu= \bar z_\kappa=\langle y \rangle_{\mu_\kappa}\in \mathbb{R}^n, \quad \bar z^\mu=[\bar z^T, \cdots, \bar z_K^T   ]^{T} .\label{zmu}
\end{align}
More generally, for the measure flow  $\{\mu^\dag(t)=(\mu_1^\dag(t), \cdots,\mu_K^\dag(t) ), 0\le t\le T\}$, we
denote $\bar z_\kappa^{\mu^\dag(t)}= \bar z_\kappa(t)=\langle y\rangle_{\mu^\dag_{\kappa}(t)}$ and $\bar z^{\mu^\dag}(t)=\bar z^{\mu^\dag(t)}=[\bar z_1^T(t),\cdots, \bar z_K^T(t)]^T$. For this section, $\mu$ and $\mu^\dag$ always contain $K$ components,
and this should be clear from the context.

We introduce the following model with $K+1$ players
with dynamics
\begin{align*}
dX_0^\dag& = (A_0X_0^\dag +B_0 u_0 +F_0^\pi \bar z^{\mu^\dag} ) dt+ D_0 dW_0, \qquad 0\le t\le T,\\
dZ_\kappa^\dag&=( A_{\kappa}Z_\kappa^\dag + B u_{\kappa}+GX_0^\dag +F^\pi \bar z^{\mu^\dag})dt +DdW_{\kappa}, \quad 1\le \kappa \le K,
\end{align*}
and costs
\begin{align*}
&  J_0(u_0, \cdots, u_K)
=   E\int_0^T e^{-\rho t}\Big\{ \Big| X_0^\dag- \Gamma_0^\pi \bar z^{\mu^\dag}-\eta_0  \Big|_{Q_0}^2
 + u_0^T R_0 u_0 \Big\}dt, \\ % \label{lcl}\\
 &\qquad \qquad \qquad \qquad +e^{-\rho T }E\Big|X_0^\dag(T)-\Gamma_{0f}^\pi \bar z^{\mu^\dag}  (T)-\eta_{0f}\Big|_{Q_{0f}}^2,\\
&  J_\kappa(u_0,\cdots,  u_K)=
 E\int_0^T e^{-\rho t}\Big\{\Big| Z_\kappa^\dag-\Gamma_1 X_0^\dag- \Gamma_2^\pi \bar z^{\mu^\dag}  -\eta\Big|_Q^2
  + u_\kappa^T R u_\kappa \Big\}dt, \\
  &\qquad\qquad  +e^{-\rho T}E\Big|Z_\kappa^\dag(T)-\Gamma_{1f} X_0^\dag(T)-\Gamma_{2f}^\pi \bar z^{\mu^\dag}(T) -\eta_f\Big|_{Q_f}^2,\quad 1\le \kappa \le K.
\end{align*}
The parameters in the dynamics and costs are the same as in \eqref{maj}--\eqref{costJi}.
The major player takes $(X_0^\dag, \mu^\dag)$ as the state variable in its control problem, and the $\kappa$-type minor player takes $(Z_\kappa^\dag, X_0^\dag, \mu^\dag)$ as the state variable.

 Take the initial time $t$
 and initial state $( x_0, \mu)$ (resp., $( z_\kappa, x_0, \mu)$) for the major player (resp., the $\kappa$-type minor player).
Denote the value functions
$V_0(t, x_0, \mu)$, $V_\kappa(t,z_\kappa, x_0,  \mu)$,
where $t\in [0,T]$, $x_0, z_\kappa \in \mathbb{R}^n$ and $\mu= (\mu_1, \cdots, \mu_K)$, $\mu_\kappa \in {\mathcal P}_2(\mathbb{R}^n)$.

We have the Hamilton-Jacobi-Bellman (master) equations  %\partial_{x_0}
{\allowdisplaybreaks
\begin{align}
&-\partial_t V_0 +\rho V_0 \label{ME1h}  \\
 =\ & \partial_{x_0}^T V_0(A_0 x_0+B_0\hat u_0
+ F_0^\pi \bar z^{\mu} ) \nonumber  \\
  & +|x_0-\Gamma_0^\pi \bar z^{\mu}-\eta_0|^2_{Q_0}
  + \hat u_0^T R_0 \hat u_0 +(1/2) \mbox{Tr} ( \partial_{x_0x_0}V_0 D_0D_0^T )   \nonumber \\
  &+ \sum_{l=1}^K\int \partial_{y_l}^T (\partial_{\mu_l}V_0)(t, x_0, \mu; y_l) [A_{l}y_l +B\hat u_{l}(t,y_l,x_0,\mu) %\nonumber\\
   %&\qquad\qquad\qquad\qquad\qquad\qquad\qquad
+Gx_0 +F^\pi\bar z^{\mu}]\mu_l(dy_l)\nonumber \\
& +\sum_{l=1}^K\int \frac{1}{2} {\rm Tr} [\partial_{y_l y_l} (\partial_{\mu_l}V_0)(t, x_0, \mu; y_l) DD^T]\mu_l(dy_l),\nonumber
\end{align}}
where $V_0(T, x_0, \mu)=|x_0- \Gamma_{0f}^\pi
\bar z^\mu -\eta_{0f}|_{Q_{0f}}^2  $,
and
\begin{align}
&-\partial_t V_\kappa +\rho V_\kappa  \label{ME2h}  \\
 =\ & \partial_{x_0}^T V_\kappa(A_0 x_0+B_0\hat u_0+F_0^\pi\bar z^{\mu}  ) +({1}/{2}) \mbox{Tr} ( \partial_{x_0x_0}V_\kappa D_0D_0^T )\nonumber\\
   &+ \partial_{z_\kappa}^T V_\kappa(A_{\kappa} z_k+B\hat u_{\kappa}+Gx_0+
   F^\pi\bar z^{\mu}   )
   \nonumber \\
  & +|z_\kappa-\Gamma_1 x_0-\Gamma_2^\pi \bar z^{\mu} -\eta|^2_{Q} + \hat u_\kappa^{T} R \hat u_{\kappa}   +({1}/{2})\mbox{Tr}(\partial_{z_\kappa z_\kappa}V_\kappa DD^T) \nonumber\\
  &+ \sum_{l=1}^K\int \partial_{y_l}^T (\partial_{\mu_l}V_\kappa)(t,z_k, x_0, \mu; y_l) [A_{l}y_l +B\hat u_{l}(t,y_l, x_0,\mu) %\nonumber\\
%&\qquad\qquad\qquad\qquad\qquad\qquad\qquad\qquad
+Gx_0 +F^\pi\bar z^{\mu}
]\mu_l(dy_l)\nonumber\\
& +\sum_{l=1}^K\int \frac{1}{2} {\rm Tr} [\partial_{y_ly_l} (\partial_{\mu_l}V_\kappa)(t,z_k, x_0, \mu; y_l) DD^T]\mu_l(dy_l),\nonumber
\end{align}
where $V_\kappa (T, z_\kappa, x_0,\mu)=  |z_\kappa - \Gamma_{1f} x_0-\Gamma_{2f}^\pi \bar z^\mu- \eta_f |_{Q_f}^2$.

We may regard \eqref{ME1h}--\eqref{ME2h} as $K+1$ dynamic programming equations which are formally derived by a local expansion of $V_0(t+\epsilon, X_0^\dag(t+\epsilon), \mu^\dag(t+\epsilon))$ and $V_\kappa(t+\epsilon, Z_\kappa^\dag(t+\epsilon), X_0^\dag(t+\epsilon), \mu^\dag(t+\epsilon))$ given $X_0^\dag (t)=x_0$, $Z^\dag_\kappa(t)=z_\kappa$, $\mu^\dag(t)=\mu$.
Each of the equilibrium strategies in  $(\hat u_0, \hat u_1, \cdots, \hat u_K)$ is selected as a best response to  maximize its own Hamiltonian. This amounts to finding
\begin{align}
&\hat u_0=\arg\max_{u_0} (- \partial_{x_0}^T V_0 B_0 u_0- u_0^T R_0 u_0) , \nonumber\\
&\hat u_\kappa=\arg\max _{u_\kappa} (- \partial_{z_\kappa}^T V_\kappa B u_{\kappa} -  u_\kappa^{T} R  u_{\kappa}). \label{ukarg}
\end{align}
The integral terms in \eqref{ME1h}--\eqref{ME2h} account for the variation of  the value function that is caused by the small perturbation of the mean field term. The choice of $\hat u_\kappa$ in \eqref{ukarg} does not directly use the integral terms since the control of the minor player in question has little impact on the mean field. The reader may consult \cite{CCP18,LL18} for master equations of nonlinear major player models, and \cite{CDLL15} for differentiation with respect to probability measures (if $K=1$, our notation $\partial_\mu V_0$ is equivalent to $ \frac{\delta V_0}{\delta \mu}$ in \cite{CDLL15}). Note that after differentiation of the value functions, a new variable $y_l$  arises so that we write
\begin{align}
(\partial_{\mu_l}V_0)(t, x_0, \mu; y_l), \qquad (\partial_{\mu_l}V_\kappa)(t,z_k, x_0, \mu; y_l). \label{dmuv0v}
\end{align}

We determine
$$B_0^T \partial_{x_0} V_0+2 R_0 \hat u_0=0,\qquad  B^T \partial_{z_\kappa}^T V_\kappa   +  2 R  \hat u_{\kappa}=0,$$
which gives
\begin{align}
&\hat u_0(t, x_0,\mu)= -({1}/{2}) R_0^{-1}B_0^T \partial_{x_0} V_0(t, x_0, \mu),  \label{u0hmu}\\
&\hat u_{\kappa}(t , z_\kappa , x_0,\mu)  = -({1}/{2}) R^{-1} B^T \partial_{z_\kappa} V_\kappa (t,z_\kappa, x_0,  \mu). \label{ukhmu}
\end{align}
Next, substituting \eqref{u0hmu}--\eqref{ukhmu} into \eqref{ME1h}--\eqref{ME2h} we obtain
\begin{align}
&-\partial_t V_0 +\rho V_0 \label{ME1} \\
 =\ & \partial_{x_0}^T V_0(A_0 x_0+F_0^\pi\bar z^\mu ) -({1}/{4})
  \partial_{x_0}^TV_0 B_0 R_0^{-1} B_0^T\partial_{x_0}V_0\notag\\
  &  +|x_0-
  \Gamma_0^\pi\bar z^\mu -\eta_0|^2_{Q_0} +({1}/{2}) \mbox{Tr} ( \partial_{x_0x_0}V_0 D_0D_0^T )  \notag \\
  &+ \sum_{l=1}^K\int \partial_{y_l}^T (\partial_{\mu_l}V_0)(t, x_0, \mu; y_l) (A_{l}y_l +Gx_0 +F^\pi\bar z^\mu)\mu_l(dy_l)\notag\\
&- \sum_{l=1}^K \int \frac{1}{2} \partial_{y_l}^T (\partial_{\mu_l}V_0)(t, x_0, \mu; y_l)BR^{-1} B^T \partial_{y_l} V_l (t, y_l, x_0, \mu)  \mu_l(dy_l)\nonumber\\
& +\sum_{l=1}^K\int \frac{1}{2} {\rm Tr} [\partial_{y_ly_l} (\partial_{\mu_l}V_0)(t, x_0, \mu; y_l) DD^T]\mu_l(dy_l), \notag
\end{align}
where $V_0(T, x_0, \mu)=|x_0- \Gamma_{0f}^\pi \bar z^\mu -\eta_{0f}|_{Q_{0f}}^2  $,
and {\allowdisplaybreaks
\begin{align}
&-\partial_t V_\kappa +\rho V_\kappa \label{ME2}   \\
 =\ & \partial_{x_0}^T V_\kappa(A_0 x_0  +F_0^\pi\bar z^\mu ) -({1}/{2})\partial_{x_0}^T V_\kappa B_0 R_0^{-1}B_0^T \partial_{x_0} V_0 \nonumber\\
&  +({1}/{2}) \mbox{Tr} ( \partial_{x_0x_0}V_\kappa D_0D_0^T ) \notag\\
   &+ \partial_{z_\kappa}^T V_\kappa(A_{\kappa} z_\kappa+Gx_0+F^\pi\bar z^\mu )-({1}/{4})\partial_{z_\kappa}^TV_\kappa B R^{-1} B^T\partial_{z_\kappa}V_\kappa \notag \\
  & +|z_\kappa-\Gamma_1 x_0-\Gamma_2^\pi\bar z^\mu -\eta|^2_{Q}+({1}/{2}) \mbox{Tr} ( \partial_{z_\kappa z_\kappa}V_\kappa DD^T )
\notag \\
  &+ \sum_{l=1}^K\int \partial_{y_l}^T (\partial_{\mu_l}V_\kappa)(t,z_\kappa, x_0, \mu; y_l) (A_{l}y_l +Gx_0 +F^\pi\bar z^\mu)\mu_l(dy_l) \notag \\
& - \sum_{l=1}^K \int \frac{1}{2} \partial_{y_{l}}^T (\partial_{\mu_l}
V_\kappa)(t,z_\kappa, x_0, \mu; y_{l})BR^{-1} B^T \partial_{y_l} V_l (t,y_l, x_0, \mu)  \mu_l(dy_l) \notag \\
& +\sum_{l=1}^K\int \frac{1}{2} {\rm Tr} [\partial_{y_ly_l}
(\partial_{\mu_l}V_\kappa)(t,z_\kappa, x_0, \mu; y_l) DD^T]\mu_l(dy_l), \nonumber
\end{align}}%
where $V_\kappa (T, z_\kappa, x_0,\mu)=  |z_\kappa - \Gamma_{1f} x_0-\Gamma_{2f}^\pi \bar z^\mu- \eta_f |_{Q_f}^2$.
We call \eqref{ME1} and \eqref{ME2} the master equations.

For the right hand side of \eqref{ME1}, we denote it as
$$
\chi_0\coloneqq\chi_{0,1}-\chi_{0,2} +\chi_{0,3}+\chi_{0,4} +\chi_{0,5}-\chi_{0,6} +\chi_{0,7},
$$
where each constituent term stands for the term in the master equation taking the same place. Similarly, the right hand side of \eqref{ME2} is written as
$$
\chi\coloneqq \chi_1-\chi_2+\chi_3 +\chi_4-\chi_5 +\chi_6 +\chi_7 +\chi_8-\chi_9+\chi_{10}.
$$

\subsection{Quadratic solutions}

Recalling \eqref{zmu},
for $\mu=(\mu_1, \cdots, \mu_K)$,  we now simply denote
$\bar z_\kappa =\langle y\rangle_{\mu_\kappa}\in \mathbb{R}^n $ and $\bar z=[\bar z_1^T, \cdots, \bar z_K^T]^T $.
Denote
$
\xi_0=
[
x_0^T, \bar z^T
]^T$ and
$\xi_\kappa=
[
z_\kappa^T,
x_0^T,
\bar z^T]^T.
$
We are interested in solutions of the following form:
\begin{align}
&V_0(t, x_0, \mu)= \xi_0^T {\bf P}_0^\dag(t) \xi_0+2 {\bf  s}_0^{\dag T}(t) \xi_0 +{\bf r}_0^\dag(t), \label{qs0} \\
& V_\kappa(t, z_\kappa,x_0, \mu)= \xi_\kappa^T {\bf P}_\kappa^\dag(t) \xi_\kappa+2 {\bf  s}_\kappa^{\dag T}(t) \xi_\kappa +{\bf r}_\kappa^\dag(t), \quad 1\le \kappa\le K, \label{qsk}
\end{align}
where ${\bf P}_0^\dag(t) $ and ${\bf P}_\kappa^\dag(t)$ are symmetric matrix functions of $t\in [0, T]$, and
the coefficient functions are differentiable on $[0,T]$.
Such a solution $(V_0, V_1, \cdots, V_K)$ is called a quadratic solution to the master equations.

Denote the partition
${\bf P}_0^\dag(t)= ({\bf P}^\dag_{0,jl})_{1\le j,l\le 2}
$
and $
{\bf P}_\kappa^\dag(t)=({\bf P}_{\kappa,jl}^\dag)_{1\le j,l\le 3}
$,
where ${\bf P}_{0,11}^\dag\in  \mathbb{R}^{n\times n}$, ${\bf P}_{0,22}^\dag\in \mathbb{R}^{nK\times nK}$, ${\bf P}_{\kappa,11}^\dag$, ${\bf P}_{\kappa,22}^\dag\in \mathbb{R}^{n\times n}$,   ${\bf  P}_{\kappa,33}^\dag\in \mathbb{R}^{nK\times nK}$.
Denote
\begin{align*}
{\bf s}_0^\dag(t)=
\begin{bmatrix}
{\bf s}_{0,1}^\dag\\
{\bf s}_{0,2}^\dag
\end{bmatrix}, \qquad
{\bf s}_\kappa^\dag(t)=
\begin{bmatrix}
{\bf s}_{\kappa,1}^\dag\\
{\bf s}_{\kappa,2}^\dag\\
{\bf s}_{\kappa,3}^\dag
\end{bmatrix},
\end{align*}
where ${\bf s}_{0,1}^\dag\in \mathbb{R}^{n}$,
${\bf s}_{0,2}^\dag\in \mathbb{R}^{nK}$, ${\bf s}_{\kappa,1}^\dag\in  \mathbb{R}^{n}$,
${\bf s}_{\kappa,2}^\dag\in \mathbb{R}^{n}$, and
${\bf s}_{\kappa,3}^\dag\in  \mathbb{R}^{nK}$.

In order to calculate the integral terms in the master equations to analyze quadratic solutions,
we introduce some notation. For $1\le l\le K$,
denote
\begin{align}
&\overline {\bf A}_l^\dag(t)= (A_l - BR^{-1} B^T{\bf P}^\dag_{l,11}){\bf e}_K+ F^\pi -BR^{-1} B^T   {\bf P}^\dag_{l,13}, \nonumber\\
& \overline {\bf G}_l^\dag(t)= G-BR^{-1} B^T{\bf P}^\dag_{l,12} \nonumber , \\
& \overline {\bf m}_l^\dag(t)= -  BR^{-1}B^T {\bf s}^\dag_{l,1} ,\label{mbars}
\end{align}
and
\begin{align}
&\overline {\bf A}^\dag(t)=
\begin{bmatrix}
\overline {\bf A}_1^\dag\\
\vdots\\
\overline {\bf A}_K^\dag
\end{bmatrix},\quad
\overline {\bf G}^\dag(t)=
\begin{bmatrix}
\overline {\bf G}_1^\dag\\
\vdots\\
\overline {\bf G}_K^\dag
\end{bmatrix}, \quad
\overline {\bf m}^\dag(t)=
\begin{bmatrix}
\overline {\bf m}_1^\dag\\
\vdots\\
\overline {\bf m}_K^\dag
\end{bmatrix}, \label{meAG}\\
%\end{align}
%\begin{align}
&{\mathbb A}_\kappa^\dag(t) =
\begin{bmatrix}
A_\kappa & G & F^\pi\\
0_{n\times n} & A_0- B_0R_0^{-1}B_0^T {\bf P}_{0,11}^\dag & F_0^\pi - B_0R_0^{-1}B_0^T {\bf P}_{0,12}^\dag\\
0_{nK\times n} & \overline {\bf G}^\dag & \overline {\bf A}^\dag
\end{bmatrix}.  \nonumber
\end{align}

We introduce the system of ODEs:
\begin{align}
&\rho {\bf P}^\dag_0-\dot{\bf P}_0^\dag  ={\bf P}_0^\dag
\begin{bmatrix}
A_0 &F_0^\pi\\
\overline {\bf G}^\dag & \overline {\bf A}^\dag
\end{bmatrix}
+ \begin{bmatrix}
A_0 &F_0^\pi\\
\overline {\bf G}^\dag & \overline {\bf A}^\dag
\end{bmatrix}^T
{\bf P}_0^\dag -{\bf P}_0^\dag {\mathbb B}_0R_0^{-1}{\mathbb B}_0^T {\bf P}_0^\dag+{\mathbb Q}_0^\pi \label{qsP0} \\
&\rho {\bf P}_\kappa^\dag-\dot{\bf P}^\dag_\kappa  ={\bf P}_\kappa^\dag {\mathbb A}^\dag_\kappa  + {\mathbb A}^{\dag T}_\kappa {\bf P}_\kappa^\dag
 - {\bf P}_\kappa^\dag {\mathbb B}R^{-1}{\mathbb B}^T{\bf P}_\kappa^\dag+{\mathbb Q}^\pi, \quad 1\le \kappa\le K, \label{qsPk}
 \end{align}
where  $ {\bf P}_0^\dag(T)= {\mathbb  Q}_{0f}^\pi $ and ${\bf P}^\dag_\kappa(T)={\mathbb Q}^\pi_f$.
The $K+1$ equations are all coupled together through the dependence of $(\overline {\bf G}^\dag, \overline {\bf  A}^\dag, {\mathbb A}_\kappa^\dag)$ on
$({\bf P}_0^\dag  ,  {\bf P}_1^\dag, \cdots,{\bf P}_K^\dag)$.

We further introduce the ODE system:
\begin{align}
&\rho {\bf s}_0^\dag-\dot{\bf s}_0^\dag
=\begin{bmatrix}
A_0 & F_0^\pi \\
\overline {\bf G}^\dag & \overline {\bf A}^\dag
\end{bmatrix}^T {\bf s}_0^\dag
-\begin{bmatrix}
{\bf P}_{0,11}^\dag \\
{\bf P}_{0,21}^\dag
\end{bmatrix}B_0 R_0^{-1} B_0^T {\bf s}_{0,1}^\dag
+\begin{bmatrix}
{\bf P}_{0,12}^\dag \\
{\bf P}_{0,22}^\dag
\end{bmatrix}\overline {\bf m}^\dag
 -  \bar \eta_0^\pi ,\label{qss0}  \\
&\rho {\bf s}^\dag_\kappa-\dot{\bf s}^\dag_\kappa  =
({\mathbb A}_\kappa^{\dag T}\! -\! {\bf P}^\dag_\kappa {\mathbb B}R^{-1}{\mathbb B}^T) {\bf s}^\dag_\kappa
 \! -\!
 \begin{bmatrix}
{\bf  P}_{\kappa,12}^\dag\\
{\bf  P}_{\kappa,22}^\dag\\
{\bf  P}_{\kappa,32}^\dag
 \end{bmatrix}\! \! B_0 R_0^{-1}\! B_0^T {\bf s}_{0,1}^\dag
\! +\! \begin{bmatrix}
{\bf  P}_{\kappa,13}^\dag\\
{\bf  P}_{\kappa,23}^\dag\\
{\bf  P}_{\kappa,33}^\dag
 \end{bmatrix}\! \overline{\bf m}^\dag \! -\!  \bar \eta^\pi, \label{qssk}
\end{align}
where ${\bf s}_0^\dag(T)=- \bar \eta^\pi_{0f}$ and  ${\bf s}^\dag_\kappa(T)=  -\bar\eta_f^\pi$, $1\le \kappa\le K$. Note that by \eqref{meAG}, $\overline {\bf m}^\dag $ is expressed in terms of $({\bf s}_1^\dag,\cdots, {\bf s}^\dag_K)$.
\begin{theorem}
\label{theorem:me}
The system of master equations \eqref{ME1}--\eqref{ME2} has a quadratic solution \eqref{qs0}--\eqref{qsk} if and only if $({\bf P}^\dag_0, \cdots,{\bf P}^\dag_K)$ satisfies \eqref{qsP0}--\eqref{qsPk} on $[0,T]$, which further determines  $({\bf s}^\dag_0, {\bf s}^\dag_1, \cdots, {\bf s}^\dag_K)$ as a unique solution of \eqref{qss0}--\eqref{qssk} on $[0,T]$.
\end{theorem}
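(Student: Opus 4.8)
The plan is to substitute the quadratic ansatz \eqref{qs0}--\eqref{qsk} into the master equations \eqref{ME1}--\eqref{ME2} and to match coefficients of the monomials in the finite-dimensional variables $\xi_0=[x_0^T,\bar z^T]^T$ and $\xi_\kappa=[z_\kappa^T,x_0^T,\bar z^T]^T$. Once every integral term has been evaluated, each side of \eqref{ME1}--\eqref{ME2} becomes a polynomial of degree at most two in these variables, so the equation holds for all admissible $(x_0,\mu)$ (resp. $(z_\kappa,x_0,\mu)$) if and only if the second-, first-, and zeroth-order coefficients agree separately. The second-order match will produce the Riccati system \eqref{qsP0}--\eqref{qsPk}, the first-order match the linear system \eqref{qss0}--\eqref{qssk}, and the zeroth-order match a scalar terminal-value ODE for $({\bf r}_0^\dag,\dots,{\bf r}_K^\dag)$ obtained by direct integration. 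The terminal data in \eqref{qsP0}--\eqref{qssk} are read off by expanding the quadratic terminal costs $V_0(T,\cdot)$ and $V_\kappa(T,\cdot)$, giving ${\bf P}_0^\dag(T)={\mathbb Q}_{0f}^\pi$, ${\bf s}_0^\dag(T)=-\bar\eta_{0f}^\pi$, and similarly for each $\kappa$.

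The only nonroutine step is the treatment of the measure-derivative and integral terms, and this is exactly where the infinite-dimensional equation collapses to a finite-dimensional ODE system. The essential observation is that under \eqref{qs0}--\eqref{qsk} the value functions depend on $\mu$ only through the mean vector $\bar z=[\bar z_1^T,\dots,\bar z_K^T]^T$, say $V_0=\phi_0(t,x_0,\bar z)$ with $\phi_0$ quadratic. Writing the flat derivative accordingly gives
\[
(\partial_{\mu_l}V_0)(t,x_0,\mu;y_l)=(\nabla_{\bar z_l}\phi_0)^T y_l+(\text{const}),\qquad \partial_{y_l}(\partial_{\mu_l}V_0)=\nabla_{\bar z_l}\phi_0,
\]
which is independent of $y_l$; consequently $\partial_{y_ly_l}(\partial_{\mu_l}V_0)=0$. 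Hence the second-order trace terms $\chi_{0,7}$ and $\chi_7$ vanish identically, while each first-order integral term collapses upon using $\langle y_l\rangle_{\mu_l}=\bar z_l$: the drift integral $\chi_{0,5}$ reduces to $\sum_l(\nabla_{\bar z_l}\phi_0)^T(A_l\bar z_l+Gx_0+F^\pi\bar z)$, and the coupling integral $\chi_{0,6}$ to $\tfrac12\sum_l(\nabla_{\bar z_l}\phi_0)^T BR^{-1}B^T\langle\partial_{y_l}V_l\rangle_{\mu_l}$, where $\langle\partial_{y_l}V_l\rangle_{\mu_l}=2({\bf P}_{l,11}^\dag\bar z_l+{\bf P}_{l,12}^\dag x_0+{\bf P}_{l,13}^\dag\bar z+{\bf s}_{l,1}^\dag)$ because $\partial_{y_l}V_l$ is affine in $y_l$. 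The same reduction applies verbatim to \eqref{ME2}.

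Once all terms are expressed in $(x_0,\bar z)$ (resp. $(z_\kappa,x_0,\bar z)$), collecting the second-order coefficients recovers precisely the coupling blocks $\overline{\bf A}^\dag,\overline{\bf G}^\dag$ and the matrices in \eqref{mbars}--\eqref{meAG}, yielding \eqref{qsP0}--\eqref{qsPk}; collecting the first-order coefficients, in which $\overline{\bf m}^\dag=-BR^{-1}B^T{\bf s}^\dag_{\cdot,1}$ enters through the constant part of $\langle\partial_{y_l}V_l\rangle_{\mu_l}$, yields \eqref{qss0}--\eqref{qssk}. This gives the ``only if'' direction. For the converse, given a solution $({\bf P}_0^\dag,\dots,{\bf P}_K^\dag)$ of \eqref{qsP0}--\eqref{qsPk} on $[0,T]$, the quantities $(\overline{\bf A}^\dag,\overline{\bf G}^\dag,{\mathbb A}_\kappa^\dag)$ are determined and continuous in $t$, so \eqref{qss0}--\eqref{qssk} is a linear terminal-value ODE system with continuous coefficients and therefore admits a unique solution $({\bf s}_0^\dag,\dots,{\bf s}_K^\dag)$; integrating the zeroth-order ODE produces $({\bf r}_0^\dag,\dots,{\bf r}_K^\dag)$, and reassembling \eqref{qs0}--\eqref{qsk} gives a genuine quadratic solution of \eqref{ME1}--\eqref{ME2}.

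The main obstacle I expect is bookkeeping rather than conceptual. One must verify that the collapsed integral terms reproduce exactly the off-diagonal coupling structure, in particular that the inter-value-function coupling through $\partial_{y_l}V_l$ contributes precisely the entries $-BR^{-1}B^T{\bf P}^\dag_{l,11}$, $-BR^{-1}B^T{\bf P}^\dag_{l,12}$, $-BR^{-1}B^T{\bf P}^\dag_{l,13}$ to $\overline{\bf A}^\dag,\overline{\bf G}^\dag$ and $-BR^{-1}B^T{\bf s}^\dag_{l,1}$ to $\overline{\bf m}^\dag$, and that these match block-by-block across all $K+1$ equations with the correct symmetrization so that the Riccati right-hand sides come out symmetric. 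Tracking the block indexing of ${\bf P}_\kappa^\dag$ in the $(z_\kappa,x_0,\bar z)$ ordering consistently throughout is the most error-prone part of the argument.
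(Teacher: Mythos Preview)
Your proposal is correct and follows essentially the same approach as the paper's proof in Appendix~A: substitute the quadratic ansatz, reduce the measure-derivative and integral terms using the fact that $V_0,V_\kappa$ depend on $\mu$ only through $\bar z$ (the paper does this via the explicit computations in Lemmas~A.1 and~A.2), and then match quadratic, linear, and constant coefficients to obtain \eqref{qsP0}--\eqref{qsPk}, \eqref{qss0}--\eqref{qssk}, and the scalar ODEs for ${\bf r}_k^\dag$. One small labeling slip: in the paper's enumeration for \eqref{ME2} the vanishing trace-of-measure-derivative term is $\chi_{10}$, not $\chi_7$ (which is the nonzero term $\tfrac12\mathrm{Tr}(\partial_{z_\kappa z_\kappa}V_\kappa DD^T)$); your argument is unaffected since you clearly mean the $\partial_{y_ly_l}(\partial_{\mu_l}V_\kappa)$ term.
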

\proof See appendix A. \endproof

\begin{theorem}
The NCE equation system \eqref{conricK}--\eqref{conodeK} has a solution as specified in Definition \ref{def:sol} if and only if the master equation system \eqref{ME1}--\eqref{ME2} has a quadratic solution of the form \eqref{qs0}--\eqref{qsk}. If their solutions exist, they are unique and  moreover
\begin{align}
P_k ={\bf P}^\dag_k,\quad s_k={\bf s}^\dag_k,\quad   \mbox{for}\quad k=0, 1, \cdots, K. \label{PPss}
\end{align}
\end{theorem}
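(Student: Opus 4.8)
The plan is to collapse both existence statements onto one and the same Riccati ODE system and then match it coefficient by coefficient. By Lemma~\ref{lemma:NCE}(ii) the NCE equation system \eqref{conricK}--\eqref{conodeK} is solvable if and only if its Riccati subsystem \eqref{conricK} is solvable, and by Theorem~\ref{theorem:me} the master equations admit a quadratic solution if and only if \eqref{qsP0}--\eqref{qsPk} is solvable. Hence it suffices to prove that, once the algebraic constraints are eliminated, \eqref{conricK} and \eqref{qsP0}--\eqref{qsPk} are literally the same system of matrix ODEs with identical terminal data. The equivalence of existence, the identity $P_k=\mathbf{P}_k^\dag$, and uniqueness (inherited from Lemma~\ref{lemma:NCE}(iii) and the uniqueness already asserted in Theorem~\ref{theorem:me}) then all follow simultaneously.

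The core of the argument is the matrix identity for the Riccati coefficients. Substituting the third and fourth lines of \eqref{conricK} for $\overline A_\kappa,\overline G_\kappa$ into the blocks of $\mathbb{A}_0$ produces exactly the stacked matrices $\overline{\mathbf{A}}^\dag,\overline{\mathbf{G}}^\dag$ of \eqref{mbars}--\eqref{meAG} under the identification $P_j=\mathbf{P}_j^\dag$ (reading the index on $\mathbf{e}$ in \eqref{mbars} as $l$), so that $\mathbb{A}_0$ agrees with the coefficient block appearing in \eqref{qsP0}. For \eqref{qsPk} I would carry out the block product $\mathbb{A}_0-\mathbb{B}_0R_0^{-1}\mathbb{B}_0^T P_0$; since $\mathbb{B}_0=[B_0^T,0]^T$, only the top block rows are perturbed, and the $(2,2)$ block of $\mathbb{A}_\kappa$ becomes precisely the lower-right block of $\mathbb{A}_\kappa^\dag$ in \eqref{meAG}. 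As the quadratic terms, the data $\mathbb{Q}_0^\pi,\mathbb{Q}^\pi$, and the terminal conditions $\mathbb{Q}_{0f}^\pi,\mathbb{Q}_f^\pi$ are already written identically in the two formulations, the two Riccati systems coincide and $P_k=\mathbf{P}_k^\dag$ whenever either solution exists.

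With the Riccati solutions identified, I would next verify that the linear systems \eqref{conodeK} and \eqref{qss0}--\eqref{qssk} for the $s$-variables are the same linear ODE. Expanding $\mathbb{M}_0=[0^T,\overline m^T]^T$ and $\mathbb{M}=[0^T,-(B_0R_0^{-1}B_0^T s_{0,1})^T,\overline m^T]^T$ and multiplying by the partitioned $P_0,P_\kappa$, the products $P_0\mathbb{M}_0$ and $P_\kappa\mathbb{M}$ split exactly into the column-stacked $\mathbf{P}^\dag$-blocks paired with $B_0R_0^{-1}B_0^T\mathbf{s}_{0,1}^\dag$ and $\overline{\mathbf{m}}^\dag$ on the right of \eqref{qss0}--\eqref{qssk}; similarly $P_0\mathbb{B}_0R_0^{-1}\mathbb{B}_0^T s_0$ collapses to $\begin{bmatrix}P_{0,11}\\ P_{0,21}\end{bmatrix}B_0R_0^{-1}B_0^T s_{0,1}$. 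The forcing terms $-\bar\eta_0^\pi,-\bar\eta^\pi$, the terminal data, and the relation $\overline m_\kappa=-BR^{-1}\mathbb{B}^T s_\kappa=-BR^{-1}B^T s_{\kappa,1}$ (which matches $\overline{\mathbf{m}}_l^\dag$) all agree by construction, so $s_k=\mathbf{s}_k^\dag$ and \eqref{PPss} holds.

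The only real difficulty is the block-matrix bookkeeping in these two comparisons: one must respect the implicit order of operations (first eliminate $\overline A_\kappa,\overline G_\kappa$ via the algebraic constraints, then read off the Riccati coefficients) so that the reduced NCE coefficients reproduce the coefficient matrices $\mathbb{A}_\kappa^\dag$ written directly in the master-equation form, and one must line up the partition indices $(j,l)$ of $P_0,P_\kappa$ with the column blocks of $\mathbb{M}_0,\mathbb{M}$. There is no analytic obstacle beyond this: once coefficient matrices, forcing terms, and terminal data are seen to coincide, the conclusion is immediate from the two uniqueness results already in hand.
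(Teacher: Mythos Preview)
Your proposal is correct and follows essentially the same approach as the paper: reduce both existence statements to solvability of the respective Riccati subsystems via Lemma~\ref{lemma:NCE}(ii) and Theorem~\ref{theorem:me}, eliminate the algebraic constraints $(\overline A,\overline G)$ to see that \eqref{conricK} and \eqref{qsP0}--\eqref{qsPk} define the same vector field with the same terminal data, and then match the linear $s$-systems \eqref{conodeK} and \eqref{qss0}--\eqref{qssk} in the same way. The paper's own proof is considerably terser on the block-matrix bookkeeping that you spell out, but the logical skeleton is identical.
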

\proof  First of all, the NCE equation system has a solution if and only if \eqref{conricK} has a solution, which is necessarily unique.
By Theorem \ref{theorem:me}, the master equation system has a quadratic solution if and only if \eqref{qsP0}--\eqref{qsPk} has a solution.

We rewrite the ODE system of $({ P}_0, \cdots, { P}_K)$ by
expressing $(\overline A, \overline G)$ in \eqref{conricK} in terms of $({ P}_1, \cdots, { P}_K)$. The gives a new ODE system where the vector field only has the unknowns $({P}_0, { P}_1, \cdots, { P}_K)$. Subsequently we see that the above new vector field is the same as the one for $({\bf P}^\dag_0, \cdots,{\bf P}^\dag_K)$   in \eqref{qsP0}--\eqref{qsPk}. This proves the first part of the theorem together with $P_k ={\bf P}_k^\dag$ for all $0\le k\le K$. By showing the equivalence between the two ODE systems \eqref{conodeK} and  \eqref{qss0}--\eqref{qssk}, we further obtain $ s_k={\bf s}^\dag_k$ for all $0\le k\le K$. \endproof

\subsection{Comparison of feedback control laws}

Suppose \eqref{ME1}--\eqref{ME2} has a quadratic solution. By \eqref{u0hmu}--\eqref{ukhmu}
and \eqref{qs0}--\eqref{qsk},  we have
\begin{align}
&u_0(t)= -R_0^{-1}B_0^T ({\bf P}_{0,11}^\dag X_0^\dag(t) +{\bf P}^\dag_{0,12} \bar z(t)+ {\bf s}_{0,1}^\dag(t)), \label{u0q}\\
&u_\kappa(t)=-R^{-1}B^T ({\bf P}_{\kappa,11}^\dag Z^\dag_\kappa(t)+{\bf P}^\dag_{\kappa,12}X^\dag_0(t) +{\bf P}^\dag_{\kappa,13} \bar z(t)+ {\bf s}_{\kappa ,1}^\dag (t)),\label{ukq}
\end{align}
where the right hand sides use the value of the processes at time $t$.
We need to determine the equation of $\bar z$.
Under \eqref{u0q}--\eqref{ukq} we have the closed-loop equation
\begin{align}
dZ_\kappa^\dag&=( A_{\kappa}Z_\kappa^\dag +GX_0^\dag +F^\pi \bar z)dt +DdW_{\kappa}\nonumber\\
& - BR^{-1} B^T ({\bf P}_{\kappa,11}^\dag Z^\dag_\kappa+{\bf P}^\dag_{\kappa,12}X^\dag_0 +{\bf P}^\dag_{\kappa,13} \bar z + {\bf s}_{\kappa ,1}^\dag   )dt.\label{zkcl}
\end{align}
Consider $N_\kappa$ $\kappa$-type players with independent  Brownian motions and initial states of mean $\alpha_0$. We take their empirical mean
by averaging \eqref{zkcl} and let $N_\kappa\to \infty$. The limit of the empirical mean should  regenerate $\bar z$, and this derives
\begin{align}
d\bar z_\kappa %= &[(A_\kappa -BR^{-1} B^T{\bf P}_{\kappa,11}^\dag ) \bar %z_\kappa  + (F^\pi- BR^{-1} B^T {\bf P}^\dag_{\kappa,13} ) \bar z]dt\nonumber% %\\
% & +(G-BR^{-1} B^T {\bf P}^\dag_{\kappa,12} ) X_0^\dag dt - BR^{-1} B^T %{\bf s}_{\kappa ,1}^\dag dt\nonumber  \\
=(& (A_\kappa -BR^{-1} B^T{\bf P}_{\kappa,11}^\dag ){\bf e}_\kappa +
F^\pi- BR^{-1} B^T {\bf P}^\dag_{\kappa,13} )\bar z dt \nonumber \\
& +(G-BR^{-1} B^T {\bf P}^\dag_{\kappa,12} ) X_0^\dag dt - BR^{-1} B^T {\bf s}_{\kappa ,1}^\dag dt,
\end{align}
where $\bar z_\kappa (0)=\alpha_0$.

By \eqref{PPss} and \eqref{zkco},
if we set $\theta_i=\kappa$, $X_\kappa^\dag(0)=\bar X_i(0) $, $X_0^\dag(0)=\bar X_0(0) $, and  $W_\kappa =W_i$,  the process $(X^\dag_\kappa, X_0^\dag , \bar z)$ under the  strategies \eqref{u0q}--\eqref{ukq} is equal to the process $(\bar X_i, \bar X_0, \bar Z)$ under the NCE based  strategies. Accordingly, the two sets of control laws are equivalent.

\section{Homogeneous Minor Players}
\label{sec:as}

For this section, all minor players form a single type so that
$A_{\theta_i}\equiv A $.
The state processes of  the $N +1$ players ${\mathcal A}_k$, $0\le k\le N$, satisfy  the  SDEs:
\begin{align}
dX_0(t)=\  &\big(A_0X_0(t)+B_0u_0(t)+F_0X^{(N)}(t)\big)dt
 +D_0dW_0(t),\label{stateX0}\\
dX_i(t)=\  &\big(AX_i(t)+Bu_i(t)+FX^{(N)}(t)+GX_0(t)\big)dt
+DdW_i(t),\label{stateXi} \\
& 1\le i\le N,\quad t\ge 0,  \nonumber
\end{align}
The costs are given by \eqref{costJ0}--\eqref{costJi}.

\subsection{The Nash certainty equivalence equation system}

We follow the notation in Section \ref{sec:sub:nce} to denote
\begin{align*}
&\mathbb{A}_0(t)=
\begin{bmatrix}
A_0 & F_0\\
\overline G(t) & \overline A (t)
\end{bmatrix}, \quad
\mathbb{B}_0=
\begin{bmatrix}
B_0 \\
0_{n\times n_1}
\end{bmatrix},\quad
\mathbb{M}_0(t)=
\begin{bmatrix}
0_{n\times 1}\\
\overline m(t)
\end{bmatrix}\\
& \mathbb{Q}_0= [I, -\Gamma_0]^T Q_0 [I, -\Gamma_0], \quad
\bar{\eta}_0=[I, -\Gamma_0]^T Q_0 \eta_0\\
& \mathbb{Q}_{0f}= [I, -\Gamma_{0f}]^T Q_{0f} [I, -\Gamma_{0f}], \quad
\bar{\eta}_{0f}=[I, -\Gamma_{0f}]^T Q_{0f} \eta_{0f},
\end{align*}
where we have  $\overline A(t) $, $\overline G(t)\in \mathbb{R}^{n\times n} $,  $\overline m(t)\in \mathbb{R}^n $.
 Denote
\begin{align*}
&\mathbb{A}(t)=
\begin{bmatrix}
A   &  [G\qquad F]  \\
0_{2n\times n}  &  \mathbb{A}_0(t)-\mathbb{B}_0 R_0^{-1} \mathbb{B}_0^T P_0(t)
\end{bmatrix} ,\quad
\mathbb{B} =
\begin{bmatrix}
B\\
0_{2n\times n_1}
\end{bmatrix}, \\
&\mathbb{M}(t)=
\begin{bmatrix}
0_{n\times 1}\\
\mathbb{M}_0- \mathbb{B}_0 R_0^{-1} \mathbb{B}_0^Ts_0
\end{bmatrix} ,  \\
& \mathbb{Q}=[I, -\Gamma_1, -\Gamma_2]^T Q[I, -\Gamma_1, -\Gamma_2], \qquad \bar \eta = [I, -\Gamma_1, -\Gamma_2]^T Q \eta,\\
& \mathbb{Q}_f=[I, -\Gamma_{1f}, -\Gamma_{2f}]^T Q_f[I, -\Gamma_{1f}, -\Gamma_{2f}], \qquad \bar \eta_f = [I, -\Gamma_{1f}, -\Gamma_{2f}]^T Q_f \eta_f.
\end{align*}

The NCE equation system   \eqref{conricK}--\eqref{conodeK} now reduces to  i)
\begin{align}
\left\{
\begin{array}{l}
\rho P_0= \dot{P}_0+ P_0\mathbb{A}_0+\mathbb{A}_0^T P_0 -P_0 \mathbb{B}_0 R_0^{-1}\mathbb{B}_0^T P_0 +{\mathbb Q}_0,\\
\rho P_1 =\dot{P}_1+P_1 \mathbb{A} +\mathbb{A}^T
P_1 - P_1 {\mathbb{B}} R^{-1}{\mathbb{B}}^T P_1 +\mathbb{Q},\\
  \overline A(t)= A + F- BR^{-1} B^T P_{1, 11} - BR^{-1} B^TP_{1, 13} ,  \\
 \overline G(t) =G-BR^{-1} B^T P_{1, 12} ,
\end{array}\right. \label{conric}
\end{align}
where
${ P}_0(T)= \mathbb{Q}_{0f}$,
${ P}_1(T) = {\mathbb Q}_f$,
\begin{align}
P_1=(P_{1,jk})_{1\le j,k\le 3}
%\left[
%\begin{array}{ccc}
%P_{1,11}& P_{1,12} &P_{1,13}\\
%P_{1,21}& P_{1,22} &P_{1,23}\\
%P_{1,31}& P_{1,32} &P_{1,33}
%\end{array}\right],
\quad P_{1,jk}(t)\in \mathbb{R}^{n\times n},
 \notag
\end{align}
and ii)
\begin{align}
\left\{
\begin{array}{l}
\rho s_0= \dot{s}_0 +(\mathbb{A}_0^T-P_0\mathbb{B}_0 R_0^{-1} \mathbb{B}_0^T) s_0+P_0 \mathbb{M}_0-\bar\eta_0,
\\
\rho  s_1= \dot{s}_1 +(\mathbb{A}^T-P_1 {\mathbb{B}} R^{-1} {\mathbb{B}}^T)  s_1+P_1 {\mathbb{M}}-\bar\eta, \\
\overline m(t) =-   BR^{-1} {\mathbb{B}}^T s_1(t),
\end{array}\right.\label{conode}
\end{align}
where ${ s}_0(T) =- \bar\eta_{0f}$,
$ s_1(T)= -\bar\eta_f$.

\subsection{The asymptotic solvability problem}

To begin with, we provide some background on asymptotic solvability  based on \cite{MH20}.
Denote by ${\bf 1}_{k\times l}$  a $k\times l$ matrix with all entries equal to 1, and by the column  vectors  $\{e_1^k, \ldots, e_k^k\}$  the canonical basis of $\mathbb {R}^k$. For instance, $e_1^k=[1, 0 ,\cdots, 0]^T\in \mathbb{R}^k$.
  Define
\begin{align}
&\mbx=(\mbx_0^T,\mbx_1^T,\cdots,\mbx_N^T)^T\in\mathbb{R}^{(N+1)n},
\nonumber\\
&X(t)= \begin{bmatrix} X_0(t) \\ \vdots \\ X_N(t)
\end{bmatrix}\in \mathbb{R}^{(N+1)n},\nonumber
\quad W(t)=\begin{bmatrix}W_0(t) \\ \vdots \\ W_N(t)
\end{bmatrix}\in \mathbb{R}^{(N+1)n_2},\nonumber   \\
&\widehat \mbA = \mbox{diag}[A_0, A, \cdots, A]+\left[
\begin{matrix}
& 0_{n\times n},&{\bf 1}_{1\times N}\otimes \frac{F_0}{N}\\
& {\bf 1}_{N\times 1}\otimes G, &{\bf 1}_{N\times N}\otimes
\frac{F}{N}
\end{matrix}\right],\nonumber\\
&\widehat \mbD =\mbox{diag}[D_0, D, \cdots, D]\in \mathbb{R}^{(N+1)n\times (N+1)n_2},\nonumber \\
&\mbB_0 = e_1^{N+1}\otimes B_0\in\mathbb{R}^{(N+1)n\times n_1}, \nonumber\\
&\mbB_{k} = e_{k+1}^{N+1}\otimes B\in\mathbb{R}^{(N+1)n\times n_1},\qquad 1 \leq k \leq N. \nonumber
\end{align}

Now we write \eqref{stateX0} and \eqref{stateXi} in the form
\begin{align}\label{bigx}
dX(t)=\Big(\widehat \mbA X(t)+\sum_{k=0}^N \mbB_k u_k(t)\Big)dt+\widehat \mbD dW(t), \quad t\ge 0.
\end{align}
We consider closed-loop perfect state (CLPS) information \cite{BO99} so that
$X(t)$ is observed by each player, and look for Nash strategies. Let $u_{-k}$ denote the strategies of all players other than ${\mathcal A}_k$.
 A set of strategies
 $(\hat u_0,\cdots, \hat u_N)$ is a Nash equilibrium
if for all $0\le k\le N$,  we have
$
J_k(\hat u_k, \hat u_{-k})\le J_k(u_k, \hat u_{-k}),
$
for any state feedback based strategy $u_k$ which together with $\hat u_{-k}$ ensures a unique solution of $X(t)$ on $[0,T]$.
Denote
\begin{align}
&\mbK_{0}= [{I_n},0,\cdots,0]-\tfrac{1}{N}
[0,\Gamma_{0},\cdots,\Gamma_{0}], \nonumber  \\
& \mbK_{0f}= [{I_n},0,\cdots,0]-\tfrac{1}{N}
[0,\Gamma_{0f},\cdots,\Gamma_{0f}], \notag \\
& \mbQ_{0}=\mbK_{0}^T Q_{0} \mbK_{0}, \qquad  \mbQ_{0f}=\mbK_{0f}^T Q_{0f} \mbK_{0f}, \notag\\
&\mbK_{i}=[0,0,\cdots,{I}_{n},0,\cdots,0]-[\Gamma_{1},0,\cdots,0]  -\tfrac{1}{N}[0,\Gamma_{2},\cdots,\Gamma_{2}], \label{kig}\\
&\mbK_{if}=[0,0,\cdots,{I}_{n},0,\cdots,0]-[\Gamma_{1f},0,\cdots,0]  -\tfrac{1}{N}[0,\Gamma_{2f},\cdots,\Gamma_{2f}], \notag  \\
& \mbQ_{i}=\mbK_{i}^T Q \mbK_{i},  \qquad \mbQ_{if}=\mbK_{if}^T Q_f \mbK_{if},\notag\\
&\widehat \mbA_{\rho/2}= \widehat \mbA -\rho I/2,\qquad \widehat \mbA_{\rho}= \widehat \mbA -\rho I , \notag
\end{align}
where $I_n$ is the $(i+1)$th submatrix in \eqref{kig}. We have $\mbK_0,\mbK_i\in \mathbb {R}^{n\times(N+1)n}$ and $\mbQ_0,\mbQ_i\in \mathbb {R}^{(N+1)n\times(N+1)n}$.

Based on \cite{MH20},    we  introduce the equation system:
\begin{align}\label{DE3_P0}
\begin{cases}
\dot{\mbP}_0(t) =  - \big({\mbP}_0\widehat{\mbA}_{\rho/2}+\widehat{\mbA}_{\rho/2}^T \mathbb{\mbP}_0\big)+
                   {\mbP}_0{\mbB}_0 R_0^{-1} {\mbB}_0^T{\mbP}_0 \\
\qquad\qquad        + {\mbP}_0\sum_{k=1}^N {\mbB}_k R^{-1} {\mbB}_k^T {\mbP}_k
         +\sum_{k=1}^N {\mbP}_k{\mbB}_k R^{-1} {\mbB}_k^T {\mbP}_0 -{\mbQ}_{0}   , \\
 {\mbP}_0(T) = {\mbQ}_{0f},
 \end{cases}
\end{align}
\begin{align}\label{DE3_S0}
\begin{cases}
\dot{{\mbS}}_0(t) = - \widehat{{\mbA}}^T_{\rho} {\mbS}_0  +  {\mbP}_0{\mbB}_0 R_0^{-1} {\mbB}_0^T {\mbS}_0
       + \sum_{k=1}^N{\mbP}_k {\mbB}_k R^{-1} {\mbB}_k^T {\mbS}_0\\
\qquad\qquad        + {\mbP}_0\sum_{k=1}^N {\mbB}_k R^{-1}{\mbB}_k^T {\mbS}_k
   %\\       \qquad\qquad
   +\mbK_0^T Q_{0}\eta_{0} , \\
{\mbS}_0(T)= -\mbK_{0f}^T Q_{0f}\eta_{0f},
\end{cases}
\end{align}
\begin{align}\label{DE3_P}
\begin{cases}
\dot{\mbP}_i(t) =  - \big({\mbP}_i\widehat{\mbA}_{\rho/2}+\widehat{\mbA}^T_{\rho/2} \mathbb{\mbP}_i\big)
-{\mbP}_i{\mbB}_i R^{-1} {\mbB}_i^T{\mbP}_i\\
    \qquad\qquad   +  \big({\mbP}_i{\mbB}_0 R_0^{-1} {\mbB}_0^T{\mbP}_0+
                           {\mbP}_0{\mbB}_0 R_0^{-1} {\mbB}_0^T{\mbP}_i\big)\\
    \qquad\qquad   + \Big({\mbP}_i\sum_{k=1}^N {\mbB}_k R^{-1} {\mbB}_k^T {\mbP}_k+
            \sum_{k=1}^N {\mbP}_k{\mbB}_k R^{-1} {\mbB}_k^T {\mbP}_i\Big) -{\mbQ}_{i} ,\\
 {\mbP}_i(T) = {\mbQ}_{if},\qquad 1\leq i \leq N,
 \end{cases}
\end{align}
\begin{align}\label{DE3_S}
\begin{cases}
\dot{{\mbS}}_i(t) = - \widehat{{\mbA}}^T_{\rho} {\mbS}_i  +  {\mbP}_0{\mbB}_0 R_0^{-1} {{\mbB}_0}^T {\mbS}_i\\
          \qquad\qquad    + {\mbP}_i{\mbB}_0 R_0^{-1} {\mbB}_0^T {\mbS}_0    %\\
           %\qquad\qquad
            -{\mbP}_i{\mbB}_i R^{-1} {\mbB}_i^T {\mbS}_i\\
          \qquad\qquad    + \sum_{k=1}^N{\mbP}_k {\mbB}_k R^{-1} {\mbB}_k^T {\mbS}_i
              + {\mbP}_i\sum_{k=1}^N {\mbB}_k R^{-1}{\mbB}_k^T {\mbS}_k   %\\
          %\qquad\qquad
              +\mbK_{i}^T Q \eta , \\
{\mbS}_i(T)= -\mbK_{if}^T Q_{f}\eta_{f}, \qquad 1\leq i \leq N.
\end{cases}
\end{align}
The analysis in \cite{MH20} is for costs without discount.
The notion of asymptotic solvability and main results in \cite{MH20}
 can be translated to the discounted case verbatim once we let $\widehat {\mbA}_{\rho/2}$ take the role of $\widehat {\mbA}$ used in \cite{MH20} for Riccati ODEs.

Suppose that \eqref{DE3_P0} and \eqref{DE3_P} have a unique solution $(\mbP_0,\cdots ,\mbP_N)$ on $[0,T]$. Then we can uniquely solve \eqref{DE3_S0}, \eqref{DE3_S}, and the Nash game of $N+1$ players has a set of feedback Nash strategies given by
 \begin{align*}
&\hat u_0(t)=-R_0^{-1}\mbB_0^{T}(\mbP_0X(t)+\mbS_0), \\ %\label{Nashu0} \\
&\hat u_i(t)=-R^{-1} \mbB_i^T (\mbP_i X(t) +\mbS_i),\qquad  1\le i\le N. %\label{Nashui}
 \end{align*}
  The solution of the feedback Nash strategies completely reduces to the study of \eqref{DE3_P0} and \eqref{DE3_P}.

\begin{definition} \emph{\cite{MH20}} \label{definition:as}
The sequence of  Nash games specified by \eqref{stateX0}--\eqref{stateXi} and \eqref{costJ0}--\eqref{costJi} has asymptotic solvability if there exists $N_0$ such that for all $N\ge N_0$, the Riccati ODE system consisting of \eqref{DE3_P0} and \eqref{DE3_P} has a solution
 $(\mbP_0,\cdots,\mbP_N)$  on $[0,T]$ and
$
 \sup_{N\ge N_0, 0\leq t\leq T} \left({\|\mbP_0(t)\|}_{l_1}+
{\|\mbP_1(t)\|}_{l_1}\right)<\infty.
$
  %\hfill $\Box$
 \end{definition}

 Denote
 $$M_0=B_0R_0^{-1}B_0^T,\qquad M=BR^{-1}B^T.$$
We introduce  the  ODE system:
\begin{align}
\left\{
\begin{array}{l}
 \dot{\Lambda}_1^{0} =\rho{\Lambda}_1^{0}  + \Lambda_1^{0}M_0 \Lambda_1^{0}-(\Lambda_1^{0}A_0+A_0^T\Lambda_1^{0}) \\
\quad\quad +\Lambda_2^{0}(M\Lambda_a^T-G)+(\Lambda_a M-G^T){\Lambda_2^{0}}^T-Q_0, \\
\dot{\Lambda}_2^{0} =\rho {\Lambda}_2^{0}  + ( \Lambda_1^{0}M_0 -A_0^T) \Lambda_2^{0}+\Lambda_2^{0}(M(\Lambda_1+\Lambda_2)-A-F) \\
\quad\quad-\Lambda_1^{0}F_0+ (\Lambda_a M-G^T)\Lambda_3^{0}+Q_0\Gamma_0,\\
\dot{\Lambda}_3^{0} =\rho {\Lambda}_3^{0}+ {\Lambda_2^{0}}^TM_0 \Lambda_2^{0}-{\Lambda_2^{0}}^TF_0-F_0^T\Lambda_2^{0} \\
\quad\quad +\Lambda_3^{0}(M(\Lambda_1+\Lambda_2)-A-F) \\
\quad\quad +((\Lambda_1+\Lambda_2^T)M-A^T -F^T)\Lambda_3^{0}-\Gamma_0^TQ_0\Gamma_0,\\
 \dot{\Lambda}_0    =\rho {\Lambda}_0+  \Lambda_a M\Lambda_a^T-\Lambda_bG-G^T \Lambda_b^T \\
\quad\quad +\Lambda_0(M_0\Lambda_1^{0}-A_0)+(\Lambda_1^{0} M_0-A_0^T) \Lambda_0 \\
\quad\quad -\Lambda_a(G-M \Lambda_b^T)-(G^T-\Lambda_bM)
\Lambda_a^T-\Gamma_1^TQ\Gamma_1, \\
\dot{\Lambda}_1 =  \rho{\Lambda}_1  +\Lambda_1M\Lambda_1-\Lambda_1A-A^T\Lambda_1-Q,\\
 \dot{\Lambda}_2 = \rho{\Lambda}_2 +\Lambda_a^T(M_0\Lambda_2^{0}-F_0)-\Lambda_1F+(\Lambda_1M-A^T)\Lambda_2 \\
\quad\quad+\Lambda_2(M(\Lambda_1+\Lambda_2)-A-F)+Q\Gamma_2,\\
\dot{\Lambda}_3 = \rho {\Lambda}_3   + \Lambda_b^TM_0\Lambda_2^{0}+{\Lambda_2^{0}}^TM_0\Lambda_b+\Lambda_2^TM\Lambda_2 \\
\quad\quad -\Lambda_b^TF_0-F_0^T\Lambda_b-\Lambda_2^TF-F^T\Lambda_2  \\
\quad\quad +\Lambda_3(M(\Lambda_1+\Lambda_2)-A-F) \\
\quad\quad +((\Lambda_1+\Lambda_2^T) M-A^T-F^T)\Lambda_3- \Gamma_2^T Q\Gamma_2,\\
\dot{\Lambda}_a = \rho {\Lambda}_a   + (\Lambda_1^{0}M_0-A_0^T)\Lambda_a+\Lambda_a(M\Lambda_1-A) \\
\quad\quad-G^T\Lambda_1+(\Lambda_aM-G^T)\Lambda_2^T+\Gamma_1^T Q,\\
 \dot{\Lambda}_b = \rho {\Lambda}_b + \Lambda_0M_0\Lambda_2^{0}+(\Lambda_aM-G^T)(\Lambda_2+\Lambda_3)-\Lambda_0 F_0 \\
\quad\quad
-\Lambda_a F  +\Lambda_b(M(\Lambda_1+\Lambda_2)-A-F)\\
\quad\quad  +(\Lambda_1^{0}M_0-A_0^T)\Lambda_b -\Gamma_1^T Q\Gamma_2,
\end{array}
\right.\label{eqn1*}
\end{align}
where the terminal conditions are
\begin{align}
\begin{cases}
 \Lambda_1^0(T)=Q_{0f},\quad
 \Lambda_2^0(T)=-Q_{0f}\Gamma_{0f}, \quad
 \Lambda_3^0(T)=\Gamma_{0f}^TQ_{0f}\Gamma_{0f},\nonumber  \\
\Lambda_0 (T)=\Gamma_{1f}^TQ_f\Gamma_{1f},\quad
\Lambda_1 (T)=Q_{f},\nonumber \\
  \Lambda_2 (T)=- Q_f\Gamma_{2f},\quad
   \Lambda_3 (T)=\Gamma_{2f}^T Q_f\Gamma_{2f},\nonumber\\
 \Lambda_a (T)=-\Gamma_{1f}^T Q_f ,\quad
\Lambda_b (T)=\Gamma_{1f}^T Q_f\Gamma_{2f}.\nonumber  \\
\end{cases}
\end{align}

To give the reader some insights, we explain from where the equations in \eqref{eqn1*} arise. Note that asymptotic solvability is stated in terms of $(\mbP_0, \mbP_1, \cdots, \mbP_K)$, the dimension of which increases with the population size.
The discount factor having been absorbed in  $\widehat \mbA_{\rho/2}$, we follow  the procedure in \cite{MH20} to isolate a low dimensional structure from $(\mbP_0, \mbP_1, \cdots, \mbP_K)$.
Specifically, by exploiting symmetry of the ODEs \eqref{DE3_P0} and \eqref{DE3_P}, it can be shown that the large matrix $\mbP_0$ is formed from  3
distinct $n\times n$ submatrices by arranging them into $N^2$ places. Similarly, $\mbP_1$ is formed from 6 distinct $n\times n$ submatrices. Any other matrix $\mbP_k$, $k\ge 2$, is determined from $\mbP_1$ by appropriate simultaneous row and column permutations. By using the above 9 submatrices and applying appropriate re-scaling to  individual submatrices, we  derive the  ODE system \eqref{eqn1*} as $N\to \infty$; see \cite{MH20} for more details.

\begin{theorem}  {\emph{\cite{MH20}}}
\label{theorem:AS}
The sequence of games with dynamics  \eqref{stateX0}--\eqref{stateXi} and costs \eqref{costJ0}--\eqref{costJi} has asymptotic solvability if and only if \eqref{eqn1*} has a  solution on $[0,T]$.
\end{theorem}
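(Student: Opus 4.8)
The plan is to treat this as essentially a restatement of the characterization established in \cite{MH20}, first absorbing the discount into the drift and then carrying out a symmetry reduction of the high-dimensional coupled Riccati system to the finite, $N$-independent system \eqref{eqn1*}. Since the costs here carry a discount factor $\rho$, I would begin by noting that the substitution of $\widehat\mbA_{\rho/2}$ for $\widehat\mbA$ in \eqref{DE3_P0} and \eqref{DE3_P} (and of $\widehat\mbA_{\rho}$ in the affine equations) reduces the discounted problem to the undiscounted one verbatim, so that every structural argument of \cite{MH20} applies with $\widehat\mbA_{\rho/2}$ in place of $\widehat\mbA$. Both implications of the equivalence then reduce to a single correspondence: solvability on $[0,T]$ of the reduced finite-dimensional flow \eqref{eqn1*} versus solvability with uniform $l_1$-bounds of \eqref{DE3_P0}, \eqref{DE3_P} for all large $N$.

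The core step is the symmetry reduction. Because the minor players are homogeneous, the data $\widehat\mbA_{\rho/2}$, $\{\mbB_k\}$, $\mbQ_0$, $\{\mbQ_i\}$ are invariant under simultaneous permutations of the minor-player indices $1,\dots,N$, and by uniqueness of the Riccati solution on any interval where it exists this invariance is inherited by $(\mbP_0,\dots,\mbP_N)$. First, $\mbP_0$ is invariant under every permutation of the minor indices; combined with the fact that the major's cost $\mbQ_0$ sees the minors only through their empirical mean, this forces $\mbP_0$ to be built from just three distinct $n\times n$ blocks, namely the major--major corner, a common major--minor block, and a single minor--minor block shared by all index pairs (diagonal and off-diagonal alike). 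Second, $\mbP_1$ is invariant only under permutations fixing the tagged index $1$, which, again using the aggregate structure of the minor cost $\mbQ_i$, yields six distinct $n\times n$ blocks, and each $\mbP_k$ with $k\ge 2$ is obtained from $\mbP_1$ by the transposition swapping $1$ and $k$. I would then substitute this block ansatz into \eqref{DE3_P0} and \eqref{DE3_P}, verify that the quadratic Riccati nonlinearity preserves the ansatz (this is where the block arithmetic of the aggregate coupling terms $\sum_k \mbB_k R^{-1}\mbB_k^T\mbP_k$ must be checked), and rescale the major--minor and minor--minor blocks by the correct powers of $N$ so that the resulting $N$-dependent system for the nine submatrices matches \eqref{eqn1*} up to $O(1/N)$ corrections.

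With the reduced $N$-dependent system in hand, the two directions follow. For necessity, asymptotic solvability supplies the uniform bound $\sup_{N\ge N_0,\,0\le t\le T}(\|\mbP_0(t)\|_{l_1}+\|\mbP_1(t)\|_{l_1})<\infty$ of Definition \ref{definition:as}, which bounds the nine rescaled submatrices uniformly on $[0,T]$; the ODEs then yield equicontinuity, so by Arzel\`a--Ascoli a subsequence converges, and sending $N\to\infty$ kills the $O(1/N)$ terms and exhibits a solution of \eqref{eqn1*} on all of $[0,T]$. For sufficiency, a solution of \eqref{eqn1*} on $[0,T]$ provides a bounded reference trajectory; viewing the finite-$N$ reduced system as an $O(1/N)$ perturbation of \eqref{eqn1*}, a continuation argument shows that for all large $N$ the perturbed system also has a solution on the full interval $[0,T]$, staying in a fixed neighbourhood of the reference trajectory and hence uniformly bounded. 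Reassembling the blocks recovers $(\mbP_0,\dots,\mbP_N)$ with exactly the boundedness required by asymptotic solvability.

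The hard part will be the sufficiency direction, specifically ruling out finite-time blow-up of the perturbed finite-$N$ flow when only the limiting system \eqref{eqn1*} is known to be defined on $[0,T]$. Since Riccati flows can escape in finite time, I would maintain a uniform-in-$N$ a priori bound by a bootstrap on the maximal interval of existence: using the local Lipschitz property of the reduced vector field together with a priori closeness to the bounded reference solution, one shows the perturbed solution cannot leave a fixed tube before time $T$ once $N$ is large, which both prevents blow-up and delivers the uniform bound. A secondary, purely bookkeeping obstacle is verifying that the block ansatz is genuinely closed under the quadratic terms and pinning down the exact rescaling that produces \eqref{eqn1*}; this computation is routine but lengthy, and I would defer its details to \cite{MH20}.
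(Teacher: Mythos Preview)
Your proposal is correct and matches the approach, but note that the paper does not actually prove this theorem: it is stated with a citation to \cite{MH20} and no proof is given. The paper only supplies the brief explanatory paragraph preceding the statement, which outlines exactly the mechanism you describe---absorbing the discount via $\widehat{\mbA}_{\rho/2}$, exploiting permutation symmetry to reduce $\mbP_0$ to three distinct $n\times n$ submatrices and $\mbP_1$ to six, rescaling, and passing to the limit---and then defers all details to \cite{MH20}. Your sketch of both directions (Arzel\`a--Ascoli for necessity, an $O(1/N)$-perturbation continuation argument for sufficiency) is consistent with that outline and with the standard treatment in the cited reference.
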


\begin{theorem}\label{theorem:as}
The NCE equation system \eqref{conric}--\eqref{conode}  has a solution on $[0,T]$ if and only if asymptotic solvability holds.
\end{theorem}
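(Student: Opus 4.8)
The plan is to route the argument through the low-dimensional characterization of asymptotic solvability already in hand. By Theorem \ref{theorem:AS}, asymptotic solvability holds if and only if the nine-block ODE system \eqref{eqn1*} admits a solution on all of $[0,T]$, so it suffices to prove that \eqref{conric}--\eqref{conode} has a solution on $[0,T]$ if and only if \eqref{eqn1*} does. First I would discard the offset (affine) equations from the analysis: specializing Lemma \ref{lemma:NCE} to the homogeneous case $K=1$, the full NCE system \eqref{conric}--\eqref{conode} has a solution on $[0,T]$ if and only if its Riccati part \eqref{conric} does, because once $(P_0,P_1)$ and hence $(\overline A,\overline G)$ are known, \eqref{conode} is a linear terminal-value ODE that is uniquely solvable on the whole interval. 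The problem is thereby reduced to matching the two purely quadratic systems \eqref{conric} and \eqref{eqn1*}.

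Next I would decompose \eqref{conric} into block form. Writing $P_0=(P_{0,jl})_{1\le j,l\le 2}$ and $P_1=(P_{1,jl})_{1\le j,l\le 3}$ with $n\times n$ blocks, I would substitute the algebraic constraints $\overline A=A+F-M(P_{1,11}+P_{1,13})$ and $\overline G=G-MP_{1,12}$ (with $M=BR^{-1}B^T$, $M_0=B_0R_0^{-1}B_0^T$) into $\mathbb A_0$ and $\mathbb A$, so that the two matrix Riccati ODEs close up into a system for the nine independent blocks $P_{0,11},P_{0,12},P_{0,22},P_{1,11},P_{1,12},P_{1,13},P_{1,22},P_{1,23},P_{1,33}$; the remaining blocks are fixed by symmetry, which \eqref{conric} preserves by Lemma \ref{lemma:NCE}. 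The key observation is then that there is a $t$-independent linear bijection
\begin{align*}
&\Lambda_1^0=P_{0,11},\quad \Lambda_2^0=P_{0,12},\quad \Lambda_3^0=P_{0,22},\quad \Lambda_1=P_{1,11},\quad \Lambda_a=P_{1,21},\\
&\Lambda_2=P_{1,13},\quad \Lambda_0=P_{1,22},\quad \Lambda_b=P_{1,23},\quad \Lambda_3=P_{1,33},
\end{align*}
carrying the nine NCE blocks onto the nine variables of \eqref{eqn1*}. Its consistency is first checked at $t=T$: expanding the block structure of $\mathbb Q_{0f}=[I,-\Gamma_{0f}]^TQ_{0f}[I,-\Gamma_{0f}]$ and $\mathbb Q_f=[I,-\Gamma_{1f},-\Gamma_{2f}]^TQ_f[I,-\Gamma_{1f},-\Gamma_{2f}]$ reproduces exactly the terminal data listed after \eqref{eqn1*} (for instance $P_{1,13}(T)=-Q_f\Gamma_{2f}=\Lambda_2(T)$ and $P_{1,21}(T)=-\Gamma_{1f}^TQ_f=\Lambda_a(T)$).

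The computational heart of the proof is to verify that, under this substitution, the nine block equations extracted from the Riccati ODEs in \eqref{conric} coincide termwise with the nine equations of \eqref{eqn1*}. I would carry this out block by block; as a representative sample, the $(1,1)$ block of the $P_1$-equation gives $\dot P_{1,11}=\rho P_{1,11}+P_{1,11}MP_{1,11}-P_{1,11}A-A^TP_{1,11}-Q$, which is precisely the $\Lambda_1$-equation, and the $(1,3)$ block reproduces the $\Lambda_2$-equation once $\overline A$ is expanded, while the $(1,1)$ block of the $P_0$-equation reproduces the $\Lambda_1^0$-equation after identifying $-P_{0,12}\overline G=\Lambda_2^0(M\Lambda_a^T-G)$ and $-\overline G^TP_{0,12}^T=(\Lambda_aM-G^T)\Lambda_2^{0T}$. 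I expect the main obstacle to be purely organizational: keeping track of all transposes and signs in the off-diagonal blocks, where the asymmetry between $\overline A$ and $\overline G$ (and between $P_{0,12}$ and $P_{0,21}=P_{0,12}^T$) must be handled carefully, together with the quadratic cross-terms generated by $\mathbb B$ and $\mathbb B_0$ that inject $M$ and $M_0$ only into the leading blocks. Finally, because the correspondence is a fixed linear bijection of the block variables, the vector field of one system is the pushforward of the other, so a maximal solution of one extends to all of $[0,T]$ if and only if the corresponding solution of the other does, with no finite-time escape possible in one system without the other; hence \eqref{conric} is solvable on $[0,T]$ if and only if \eqref{eqn1*} is, which together with the reduction above and Theorem \ref{theorem:AS} yields the claim.
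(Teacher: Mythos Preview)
Your proposal is correct and follows essentially the same route as the paper: reduce to the Riccati part \eqref{conric} via Lemma~\ref{lemma:NCE}, write $P_0$ and $P_1$ in $n\times n$ blocks, eliminate $(\overline A,\overline G)$ via the algebraic constraints, and then identify the resulting nine-block ODE system with \eqref{eqn1*} termwise (same vector field, same terminal data), concluding through Theorem~\ref{theorem:AS}. The paper's proof uses the notation $\Phi_1^0,\Phi_2^0,\Phi_3^0,\Phi_0,\Phi_1,\Phi_2,\Phi_3,\Phi_a,\Phi_b$ for the blocks and displays the full expanded ODE system, but your block-to-$\Lambda$ dictionary is exactly the paper's identification.
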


\begin{proof}
By Lemma \ref{lemma:NCE}, \eqref{conric}--\eqref{conode} has a solution if and only if \eqref{conric} has a solution on $[0,T]$; in addition, $P_0$ and $P_1$ in such a solution are symmetric.
We denote the two matrix functions $P_0$ and $P_1$
in \eqref{conric} in the form
\begin{align}
P_0(t)=
\begin{bmatrix}
\Phi_1^0 & \Phi_2^0  \\
\Phi_2^{0T} & \Phi_3^0  \\
\end{bmatrix}, \quad P_1(t)=
\begin{bmatrix}
\Phi_1 & \Phi_a^T &\Phi_2 \\
\Phi_a & \Phi_0 &\Phi_b \\
\Phi_2^T & \Phi_b^T &\Phi_3 \\
\end{bmatrix}, \nonumber
\end{align}
where each submatrix is $n\times n$.
For ${\mathbb A}_0$ and $\mathbb{A}$ in \eqref{conric},
we rewrite
\begin{align*}
&{\mathbb A}_0(t) =\begin{bmatrix}
A_0 & F_0\\
 G-M\Phi_a^T &  A +F-M(\Phi_1+\Phi_2)
\end{bmatrix}, \\
& \mathbb{A}(t)=
\begin{bmatrix}
A   &  G & F  \\
0_{n\times n}  &  A_0-M_0 \Phi_1^0  & F_0-M_0\Phi_2^0 \\
0_{n\times n}&  G-M\Phi_a^T  &A +F-M(\Phi_1+\Phi_2)
\end{bmatrix}.
\end{align*}

From \eqref{conric}
we obtain the following ODE system
\begin{align}
\begin{cases}
\dot\Phi_1^0= \rho \Phi_1^0 - \Phi_1^0 A_0 -A_0^T \Phi_1^0 -  \Phi_2^0 (G-M \Phi_a^T)\\
 \qquad -(G^T-\Phi_a M)\Phi_2^{0T} +\Phi_1^0M_0\Phi_1^0-Q_0,\\
 \dot \Phi_2^0   =  \rho \Phi_2^0-\Phi_1^0F_0  - \Phi_2^0(A+F-
 M(\Phi_1+\Phi_2) )       \\
 \qquad -A_0^T\Phi_2^0 +(\Phi_aM-G^T) \Phi_3^0   +\Phi_1^0 M_0 \Phi_2^0 +Q_0 \Gamma_0,\\
\dot \Phi_3^0=  \rho \Phi_3^0 -\Phi_2^{0T} F_0 - F_0^T \Phi_2^0 -\Phi_3^0(A+F-
 M(\Phi_1+\Phi_2) )\\
 \qquad  -(A+F-
 M(\Phi_1+\Phi_2) )^T \Phi_3^0 +\Phi_2^{0T} M_0 \Phi_2^0 -\Gamma_0^T Q_0 \Gamma_0 , \\
 \dot \Phi_0=\rho \Phi_0 -\Phi_a G - \Phi_0 (A_0-M_0 \Phi_1^0) -\Phi_b (G-M \Phi_a^T) -G^T \Phi_a^T\\
\qquad -(A_0^T-\Phi_1^0M_0)\Phi_0
- (G^T- \Phi_a M) \Phi_b^T+\Phi_a M \Phi_a^T-\Gamma_1^TQ\Gamma_1,\\
 \dot \Phi_1 = \rho \Phi_1 -\Phi_1 A-A^T \Phi_1 +\Phi_1 M \Phi_1-Q,   \\
 \dot \Phi_2 =  \rho \Phi_2 -\Phi_1 F -\Phi_a^T (F_0-M_0 \Phi_2^0)-\Phi_2 (A+F- M(\Phi_1+\Phi_2) ) \\
\qquad - A^T \Phi_2 +\Phi_1M \Phi_2 +Q\Gamma_2 ,\\
\dot \Phi_3  =\rho \Phi_3 -\Phi_2^T F-F^T\Phi_2
-\Phi_b^T(F_0-M_0 \Phi_2^0)-(F_0^T-\Phi_2^{0T}M_0) \Phi_b\\
\qquad -\Phi_3 (A+F- M(\Phi_1+\Phi_2) )-(A+F- M(\Phi_1+\Phi_2) )^T \Phi_3\\
 \qquad  +\Phi_2^T M \Phi_2  -\Gamma_2^T Q \Gamma_2,\\
\dot \Phi_a = \rho \Phi_a -\Phi_aA -G^T \Phi_1 - (A_0^T - \Phi_1^0 M_0)
\Phi_a \\
 \qquad -(G^T- \Phi_a M) \Phi_2^T +\Phi_a M \Phi_1 +\Gamma_1^TQ,   \\
\dot \Phi_b = \rho \Phi_b -\Phi_a F-\Phi_0 (F_0-M_0 \Phi_2^0)-\Phi_b  (A+F- M(\Phi_1+\Phi_2) )\\
\qquad -G^T \Phi_2-(A_0^T-\Phi_1^0 M_0) \Phi_b -(G^T- \Phi_a M) \Phi_3\\
\qquad +\Phi_a M \Phi_2 -\Gamma_1^T Q \Gamma_2,
\end{cases} \label{conricPhi}
\end{align}
where the terminal conditions are given by
\begin{align}
\begin{cases}
\Phi_1^0(T)=Q_{0f},   \quad \Phi_2^0(T)=-Q_{0f}\Gamma_{0f},   \quad \Phi_3^0(T)=
\Gamma_{0f}^TQ_{0f}\Gamma_{0f},\\
 \Phi_0(T)=\Gamma_{1f}^TQ_f\Gamma_{1f}, \quad \Phi_1(T)=  Q_f, \\
 \Phi_2(T)=-Q_f\Gamma_{2f}, \quad \Phi_3(T)=\Gamma_{2f}^TQ_f\Gamma_{2f},\\
  \Phi_a(T)=-\Gamma_{1f}^TQ_f, \quad \Phi_b(T)=\Gamma_{1f}^TQ_f\Gamma_{2f}. \end{cases}\nonumber
\end{align}
By comparing the individual equations at the corresponding place of \eqref{eqn1*} and \eqref{conricPhi}, we see the two equation systems are determined by the same vector field with the same terminal conditions, and therefore they have the same solution.
In view of  Theorem \ref{theorem:AS} and Lemma \ref{lemma:NCE}, the theorem follows.
\end{proof}

\begin{remark}
The equation system \eqref{eqn1*} originates from the ODE system of
${\mbP_0}$, $\mbP_1$, $\cdots$, $\mbP_K$. But \eqref{conricPhi}, which is equivalent to \eqref{eqn1*}, arises from solving two optimal control problems in a low dimensional space as in Section \ref{sec:sub:nce}.
\end{remark}

\section{Conclusion}
\label{sec:con}

This paper considers LQ mean field games with a major player and investigates the relationship between several solution frameworks.
For a model of minor players of several subpopulations, an equivalence relationship is established between the Nash certainty equivalence approach and master equations. For a model with homogeneous minor players, it is shown that the Nash certainty equivalence based solution exists if and only if asymptotic solvability holds.

\section*{Appendix A:  Proof of Theorem  \ref{theorem:me} }
\renewcommand{\theequation}{A.\arabic{equation}}
\setcounter{equation}{0}
\renewcommand{\thetheorem}{A.\arabic{theorem}}
\setcounter{theorem}{0}
\renewcommand{\thesection}{A.\arabic{section}}
\setcounter{section}{0}

Note that if $({\bf P}^\dag_0, \cdots,{\bf P}^\dag_K)$ is a solution of \eqref{qsP0}--\eqref{qsPk}  on $[0,T]$, it is the unique solution by the local Lipschitz continuity of the vector fields in the $K+1$ matrix ODEs. By \eqref{mbars},
$\overline {\bf m}^\dag$ depends linearly on $({\bf s}^\dag_1, \cdots, {\bf s}^\dag_K)$.
If  $({\bf P}^\dag_0, \cdots,{\bf P}^\dag_K)$ exists on $[0,T]$, we may uniquely solve $({\bf s}^\dag_0, {\bf s}^\dag_1, \cdots, {\bf s}^\dag_K)$ from a system of linear ODEs with bounded coefficients.

We now take \eqref{qs0}--\eqref{qsk} defined for $t\in [0,T]$ as a candidate solution of the master equations.
Our plan is to substitute $(V_0, \cdots, V_K)$ into the right hand side of each of the $K+1$ equations in \eqref{ME1}--\eqref{ME2} and simplify the expression into a  quadratic form of $\xi_0$ or $\xi_\kappa$.
We directly compute the derivatives:
\begin{align*}
&\partial_{x_0} V_0 =2[{\bf P}_{0,11}^\dag, {\bf P}_{0,12}^\dag] \xi_0 +2{\bf s}^\dag_{0,1},\qquad \partial_{x_0x_0} V_0=2{\bf P}_{0,11}^\dag ,\\
&\partial_{x_0} V_\kappa = 2[{\bf P}_{\kappa,21}^\dag,{\bf P}_{\kappa,22}^\dag,{\bf P}_{\kappa,23}^\dag  ]\xi_{\kappa} +2 {\bf s}_{\kappa, 2}^\dag,\qquad
\partial_{x_0x_0} V_\kappa = 2{\bf P}_{\kappa,22}^\dag,\\
& \partial_{z_\kappa} V_\kappa = 2[{\bf P}_{\kappa,11}^\dag,{\bf P}_{\kappa,12}^\dag,{\bf P}_{\kappa,13}^\dag  ]\xi_{\kappa}+ 2 {\bf s}_{\kappa,1}^\dag,\qquad
 \partial_{z_\kappa z_\kappa } V_\kappa = 2{\bf P}_{\kappa,11}^\dag.
\end{align*}
%where $s_{01}$, $s_{k1}$, $s_{k2}$ are to be defined.

%\subsubsection{measure space derivative}

To facilitate the subsequent computation, we state two lemmas involving derivatives with respect to probability measures. Recall the notation in \eqref{dmuv0v}.
\begin{lemma}\label{lemma:Vdmu}
Suppose $V_0$ and $V_\kappa$ are given by \eqref{qs0}--\eqref{qsk}. Then we have
\begin{align*}
&[\partial_{y_1}^T\partial_{\mu_1} V_0, \cdots, \partial_{y_K}^T\partial_{\mu_K} V_0]=2\xi_0^T
\begin{bmatrix}
{\bf P}_{0,12}^\dag\\
{\bf P}_{0,22}^\dag
\end{bmatrix}
+2{\bf s}_{0,2}^{\dag T},\\
& [\partial_{y_1}^T\partial_{\mu_1} V_\kappa, \cdots, \partial_{y_K}^T\partial_{\mu_K} V_\kappa]=2(z_\kappa^T {\bf P}^\dag_{\kappa, 13} + x_0^T{\bf P}^\dag_{\kappa, 23}+  \bar z^T{\bf P}^\dag_{\kappa, 33})+2{\bf s}^{\dag T}_{\kappa,3}.
\end{align*}
\end{lemma}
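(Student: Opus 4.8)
The plan is to exploit the fact that both $V_0$ and $V_\kappa$ in \eqref{qs0}--\eqref{qsk} depend on the measure argument $\mu=(\mu_1,\dots,\mu_K)$ only through the mean vector $\bar z=\bar z^\mu=[\langle y\rangle_{\mu_1}^T,\dots,\langle y\rangle_{\mu_K}^T]^T$ sitting inside $\xi_0$ and $\xi_\kappa$. Consequently each measure derivative should collapse to an ordinary partial gradient of the quadratic form with respect to the block $\bar z_l$, and the extra $y_l$-differentiation should merely strip off the linear dependence on $y_l$. First I would recall from \cite{CDLL15} that, in the notation of this paper, $(\partial_{\mu_l}V_0)(t,x_0,\mu;y_l)$ is the linear functional (flat) derivative, characterized by
\begin{align*}
\frac{d}{d\epsilon}\Big|_{\epsilon=0}V_0\big(t,x_0,(\dots,\mu_l+\epsilon(\nu_l-\mu_l),\dots)\big)=\int_{\mathbb{R}^n}(\partial_{\mu_l}V_0)(t,x_0,\mu;y_l)\,(\nu_l-\mu_l)(dy_l),
\end{align*}
for every $\nu_l\in\mathcal P_2(\mathbb{R}^n)$.

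Next I would carry out the chain rule for the composition with the mean functional. Perturbing only $\mu_l$ gives $\langle y\rangle_{\mu_l+\epsilon(\nu_l-\mu_l)}=\bar z_l+\epsilon\int y_l(\nu_l-\mu_l)(dy_l)$, while all other mean blocks are unchanged, so differentiating the quadratic form through its $\bar z_l$-dependence yields
\begin{align*}
\frac{d}{d\epsilon}\Big|_{\epsilon=0}V_0=\big(\nabla_{\bar z_l}V_0\big)^T\int_{\mathbb{R}^n} y_l(\nu_l-\mu_l)(dy_l)=\int_{\mathbb{R}^n}\big(\nabla_{\bar z_l}V_0\big)^T y_l\,(\nu_l-\mu_l)(dy_l).
\end{align*}
Matching against the defining identity identifies $(\partial_{\mu_l}V_0)(t,x_0,\mu;y_l)=(\nabla_{\bar z_l}V_0)^T y_l$ up to an additive constant in $y_l$, which is annihilated by the subsequent differentiation. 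Hence $\partial_{y_l}(\partial_{\mu_l}V_0)=\nabla_{\bar z_l}V_0$, independent of $y_l$, and the same argument applies verbatim to $V_\kappa$.

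Finally I would compute $\nabla_{\bar z}V_0$ and $\nabla_{\bar z}V_\kappa$ directly from \eqref{qs0}--\eqref{qsk}. Expanding $V_0=x_0^T{\bf P}_{0,11}^\dag x_0+2x_0^T{\bf P}_{0,12}^\dag\bar z+\bar z^T{\bf P}_{0,22}^\dag\bar z+2{\bf s}_{0,1}^{\dag T}x_0+2{\bf s}_{0,2}^{\dag T}\bar z+{\bf r}_0^\dag$ and using the symmetry of ${\bf P}_0^\dag$ gives $\nabla_{\bar z}V_0=2{\bf P}_{0,12}^{\dag T}x_0+2{\bf P}_{0,22}^\dag\bar z+2{\bf s}_{0,2}^\dag$. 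Since the $l$-th $n$-block of $\nabla_{\bar z}V_0$ equals $\nabla_{\bar z_l}V_0=\partial_{y_l}(\partial_{\mu_l}V_0)$, concatenating the transposed blocks horizontally over $l=1,\dots,K$ produces $[\partial_{y_1}^T\partial_{\mu_1}V_0,\dots,\partial_{y_K}^T\partial_{\mu_K}V_0]=(\nabla_{\bar z}V_0)^T$, which is exactly the claimed row vector after repacking $x_0$ and $\bar z$ into $\xi_0$. The analogous expansion of $V_\kappa$, retaining only the terms carrying $\bar z$, yields $\nabla_{\bar z}V_\kappa=2{\bf P}_{\kappa,13}^{\dag T}z_\kappa+2{\bf P}_{\kappa,23}^{\dag T}x_0+2{\bf P}_{\kappa,33}^\dag\bar z+2{\bf s}_{\kappa,3}^\dag$, giving the second identity after transposition.

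The only genuinely delicate point is the first reduction: one must use the correct definition of the flat derivative and verify that, because $V$ is quadratic in $\bar z$ while $\bar z$ is a linear functional of $\mu$, the flat derivative is affine in $y_l$ and its $y_l$-gradient is therefore constant in $y_l$. This constancy is precisely what will later let the mean-field integral terms in \eqref{ME1}--\eqref{ME2} be evaluated in closed form; everything following it is routine block-matrix bookkeeping.
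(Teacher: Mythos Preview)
Your proof is correct and rests on the same underlying observation as the paper's: $V_0$ and $V_\kappa$ depend on $\mu$ only through the mean vector $\bar z$, so the flat derivative in $\mu_l$ is affine in $y_l$ and its $y_l$-gradient reduces to an ordinary partial gradient in $\bar z_l$. The difference is in presentation. The paper works block by block: it introduces submatrix notation (e.g.\ ${\bf P}_{0,22}^{\dag\,{\rm col}1}$ for the first $n$ columns of ${\bf P}^\dag_{0,22}$), writes out $\partial_{\mu_1}V_0$ explicitly as a linear function of $y_1$, differentiates, and then asserts the analogous computation for $l\ge 2$ before concatenating. You instead establish once and for all the chain-rule identity $\partial_{y_l}(\partial_{\mu_l}V)=\nabla_{\bar z_l}V$ for any $V$ factoring through $\bar z$, and then compute the full gradient $\nabla_{\bar z}V$ in one shot from the quadratic form. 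Your route is slightly more abstract but avoids the ad hoc submatrix bookkeeping and makes the $y_l$-independence (hence Lemma~\ref{lemma:dmuyy}) immediate; the paper's route is more explicit about what the flat derivative itself looks like before the $y_l$-differentiation.
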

\proof
Let ${\bf P}_{0,22}^{^\dag{\rm col}1}$ denote the sub-matrix consisting of the first $n$ columns of ${\bf P}^\dag_{0,22}$. Let
${\bf P}_{0,21}^{^\dag{\rm row}1}$ denote the submatrix consisting of the first $n$ rows of ${\bf P}^\dag_{0,21}$, and
${\bf s}_{0,2}^{^\dag{\rm row} 1}$ be the subvector of the first $n$ entries of ${\bf s}^\dag_{0,2}$. We obtain
\begin{align}
\partial_{\mu_1} V_0 = 2\bar z^T {\bf P}_{0,22}^{\dag{\rm col}1} y_1 + 2y_1^T{\bf P}^{\dag{\rm row}1}_{0,21} x_0 + 2y_1^T {\bf s}_{0,2}^{\dag{\rm row} 1}.\notag
\end{align}
Therefore
\begin{align}
&\partial_{y_1}^T\partial_{\mu_1} V_0 = 2\bar z^T {\bf P}_{0,22}^{\dag{\rm col}1}+
2x_0^T{\bf P}_{0,21}^{\dag{\rm row1}T}+2{\bf s}_{0,2}^{\dag{\rm row 1}T}.\notag
\end{align}
We similarly calculate $\partial_{y_l}^T\partial_{\mu_l} V_0$ for $l\ge 2$,
and obtain the first equality in the lemma.
%\begin{align}
%[\partial_{y_1}^T\partial_{\mu_1} V_0, \cdots, \partial_{y_K}^T\partial_{\mu_K} %V_0]=2\xi_0^T
%\begin{bmatrix}
%{\bf P}_{0,12}^\dag\\
%{\bf P}_{0,22}^\dag
%\end{bmatrix}
%+2{\bf s}_{0,2}^{\dag T}.\notag
%\end{align}
Next we have
\begin{align}
\partial_{\mu_l} V_\kappa = \partial_{\mu_l} (\bar z^T {\bf P}^\dag_{\kappa, 33} \bar z +2 \bar z^T {\bf P}^\dag_{\kappa, 31} z_\kappa + 2\bar z^T {\bf P}^\dag_{\kappa, 32} x_0  +2\bar z^T {\bf s}^\dag_{\kappa,3}). \notag
\end{align}
Let ${\bf P}_{\kappa, 33}^{\dag{\rm col}1}$ be the submatrix consisting of the first $n$ columns of ${\bf P}^\dag_{\kappa, 33}$. Then
$
\partial_{\mu_1}  (\bar z^T {\bf P}^\dag_{\kappa, 33} \bar z)=2 \bar z^T{\bf P}_{\kappa, 33}^{\dag{\rm col}1} y_1,
$
which gives
  %\begin{align}
$\partial_{y_1}^T\partial_{\mu_1}  (\bar z^T {\bf P}^\dag_{\kappa, 33} \bar z)=2 \bar z^T{\bf P}_{\kappa, 33}^{\dag{\rm col}1}$.
    %\end{align}
We further obtain the second equality.
%\begin{align}
%[\partial_{y_1}^T\partial_{\mu_1} V_\kappa, \cdots, \partial_{y_K}^T\partial_{\mu_K} %V_\kappa]=2(z_\kappa^T {\bf P}^\dag_{\kappa, 13} + x_0^T{\bf P}^\dag_{\kappa, %23}+  \bar z^T{\bf P}^\dag_{\kappa, 33})+2{\bf s}^{\dag T}_{\kappa,3}.\notag
%\end{align}
The lemma follows.
\endproof

\begin{lemma}\label{lemma:dmuyy}
We have $\partial_{y_ly_l}\partial_{\mu_l} V_0=0$ and
$\partial_{y_ly_l}\partial_{\mu_l} V_\kappa =0$  for all $1\le l\le K$.

\end{lemma}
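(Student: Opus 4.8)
The plan is to exploit that $V_0$ and $V_\kappa$ depend on the measure argument $\mu=(\mu_1,\dots,\mu_K)$ only through the mean vector $\bar z=[\bar z_1^T,\dots,\bar z_K^T]^T$, where each $\bar z_l=\langle y\rangle_{\mu_l}$ is a \emph{linear} functional of $\mu_l$. Since both value functions are quadratic in $\xi_0$ (resp.\ $\xi_\kappa$), hence quadratic in $\bar z$, composing a quadratic map with the linear assignment $\mu_l\mapsto\bar z_l$ renders the functional derivative $\partial_{\mu_l}V$ affine in the auxiliary variable $y_l$. Its Hessian in $y_l$ therefore vanishes identically, which is exactly the assertion.

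To make this precise I would first record the chain-rule structure. Because $\bar z_l=\int y\,\mu_l(dy)$, the linear functional derivative of the $j$-th component of $\bar z_l$ with respect to $\mu_l$ equals the $j$-th coordinate of $y_l$, so $\partial_{\mu_l}\bar z_l=y_l$. Writing each value function in the form $f(\bar z)+(\text{terms independent of }\mu)$ with $f$ quadratic, the chain rule gives $\partial_{\mu_l}V(\cdots;y_l)=(\nabla_{\bar z_l}f)^T y_l$, a linear form in $y_l$. One differentiation in $y_l$ yields $\nabla_{\bar z_l}f$, independent of $y_l$; a second differentiation gives $0$.

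In fact the required linear-in-$y_l$ structure is already exhibited inside the proof of Lemma \ref{lemma:Vdmu}, where $\partial_{\mu_l}V_0$ (and analogously $\partial_{\mu_l}V_\kappa$) is computed explicitly as a sum of terms each of the form $(\text{quantity independent of }y_l)\cdot y_l$. Hence the cleanest write-up simply invokes those computations and observes that a function linear in $y_l$ has zero second-order $y_l$-derivative, giving $\partial_{y_ly_l}\partial_{\mu_l}V_0=0$ and $\partial_{y_ly_l}\partial_{\mu_l}V_\kappa=0$ for every $1\le l\le K$.

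There is essentially no analytic obstacle here; the single point that must be checked is that the $\mu_l$-dependence enters \emph{only} through the first moment $\bar z_l$, with no higher moments of $\mu_l$ appearing in the quadratic forms. Once that is confirmed—again, as already done when deriving Lemma \ref{lemma:Vdmu}—linearity of $\partial_{\mu_l}V$ in $y_l$, and with it the vanishing of the Hessian, is immediate. The only real care is the bookkeeping of isolating the blocks of ${\bf P}_0^\dag$, ${\bf P}_\kappa^\dag$ and ${\bf s}_0^\dag$, ${\bf s}_\kappa^\dag$ that multiply $\bar z_l$ before differentiating.
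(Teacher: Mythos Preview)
Your proposal is correct and follows essentially the same route as the paper: the paper's proof is a one-line appeal to Lemma~\ref{lemma:Vdmu}, whose computation shows that $\partial_{y_l}\partial_{\mu_l}V_0$ and $\partial_{y_l}\partial_{\mu_l}V_\kappa$ are independent of $y_l$, whence the second $y_l$-derivative vanishes. Your chain-rule discussion and quadratic-in-$\bar z$ observation are just an expanded justification of that same fact.
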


\proof This follows from Lemma  \ref{lemma:Vdmu}. \endproof

We proceed to evaluate the right hand side of  \eqref{ME1} with the candidate solution $(V_0, \cdots, V_K)$. We have
\begin{align*}
&\chi_{0,1}=\partial_{x_0}^T V_0(A_0 x_0+F_0^\pi \bar z)=2(\xi_0^T [{\bf P}^\dag_{0,11}, {\bf P}^\dag_{0,12}]^T+ {\bf s}_{0,1}^{\dag T})(A_0 x_0+F_0^\pi  \bar z),\\
&\chi_{0,2}= |[{\bf P}_{0,11}^\dag, {\bf P}^\dag_{0,12}]\xi_0+ {\bf s}^\dag_{0,1}|^2_{B_0R_0^{-1} B_0^T},\\
& \chi_{0,3}=| [I, -\Gamma_0^\pi] \xi_0-\eta_0|_{Q_0}^2 ,  \\
&\chi_{0,4}= \mbox{Tr} ({\bf P}^\dag_{0,11} D_0D_0).
\end{align*}
Since $\partial_{y_l} V_l(t, y_l, x_0, \mu) = 2[{\bf P}^\dag_{l,11}, {\bf P}^\dag_{l,12}, {\bf P}^\dag_{l,13}]\xi_l|_{z_l=y_l} +2{\bf s}^\dag_{l,1} $,
we further write
\begin{align}
&A_l y_l +Gx_0+F^\pi \bar z-({1}/{2}) BR^{-1}B^T \partial_{y_l} V_l(t, y_l, x_0, \mu)\notag\\
= \ & (G-BR^{-1} B^T{\bf P}^\dag_{l,12})x_0+
 (A_l- BR^{-1} B^T{\bf P}^\dag_{l,11})y_l \notag\\
 &+ ( F^\pi -BR^{-1} B^T   {\bf P}^\dag_{l,13})\bar z -  BR^{-1}B^T {\bf s}^\dag_{l,1}. \label{Aly}
\end{align}
By \eqref{meAG}, \eqref{Aly} and Lemma \ref{lemma:Vdmu},
\begin{align}
\chi_{0,5}-\chi_{0,6}&=[\partial_{y_1}^T\partial_{\mu_1} V_0, \cdots, \partial_{y_K}^T\partial_{\mu_K} V_0]([\overline {\bf G}^\dag, \overline {\bf A}^\dag ]\xi_0 +\overline
{\bf m}^\dag)\notag \notag\\
&=2(\xi_0^T
 \begin{bmatrix}
{\bf P}^\dag_{0,12}\\
{\bf P}^\dag_{0,22}
\end{bmatrix}+{\bf s}_{0,2}^{\dag T})
([\overline {\bf G}^\dag, \overline {\bf A}^\dag ]\xi_0+\overline {\bf m}^\dag).
\end{align}
By Lemma \ref{lemma:dmuyy},
$
\chi_{0,7}=0.
$

We further evaluate the right hand side of \eqref{ME2}.
We have
\begin{align*}
\chi_1-\chi_2&=\partial_{x_0}^T V_\kappa(A_0 x_0  +F_0^\pi \bar z )- ({1}/{2})\partial_{x_0}^T V_\kappa B_0 R_0^{-1}B_0^T \partial_{x_0} V_0\notag\\
& =2( [{\bf P}^\dag_{\kappa,21},{\bf P}^\dag_{\kappa,22},{\bf P}^\dag_{\kappa,23}  ]\xi_k + {\bf s}^\dag_{\kappa, 2})^T (A_0 x_0  +F_0^\pi \bar z \\
 &\qquad\qquad- B_0 R_0^{-1}B_0^T
([{\bf P}^\dag_{0,11}, {\bf P}^\dag_{0,12}] \xi_0 +{\bf s}^\dag_{0,1}))\notag\\
&=2( [{\bf P}^\dag_{\kappa,21},{\bf P}^\dag_{\kappa,22},{\bf P}^\dag_{\kappa,23}  ]\xi_k + {\bf s}^\dag_{\kappa, 2})^T \cdot [( A_0- B_0 R_0^{-1}B_0^T{\bf P}^\dag_{0,11} )x_{0 }\\
&\qquad\qquad+ (F_0^\pi- B_0 R_0^{-1}B_0^T{\bf P}^\dag_{0,12})\bar z
 - B_0 R_0^{-1}B_0^T {\bf s}^\dag_{0,1}],\\
  \chi_3+\chi_7&=\mbox{Tr} ({\bf P}^\dag_{\kappa,22} D_0D_0+{\bf P}^\dag_{\kappa,11}DD),
\end{align*}
and
\begin{align*}
\chi_4&=\partial_{z_\kappa}^T V_\kappa(A_{\kappa} z_\kappa+Gx_0+F^\pi \bar z) \nonumber\\
&=2([{\bf P}^\dag_{\kappa,11},{\bf P}^\dag_{\kappa,12},{\bf P}^\dag_{\kappa,13}  ]\xi_\kappa+  {\bf s}^\dag_{\kappa,1})^T (A_{\kappa} z_\kappa+Gx_0+F^\pi \bar z),\\
\chi_5&=|[{\bf P}^\dag_{\kappa,11},{\bf P}^\dag_{\kappa,12},{\bf P}^\dag_{\kappa,13}  ]\xi_\kappa+  {\bf s}^\dag_{\kappa,1}|^2_{BR^{-1} B},
\\
\chi_6 &= \xi_\kappa^T[I, -\Gamma_1, -\Gamma_2^\pi ]^TQ[I, -\Gamma_1, -\Gamma_2^\pi ]\xi_\kappa  \nonumber \\
&\quad-2\eta^T Q [I, -\Gamma_1, -\Gamma_2^\pi ]\xi_\kappa+\eta^T Q\eta.
\end{align*}

Finally,
\begin{align}
\chi_{8}-\chi_{9} &= [\partial_{y_1}^T\partial_{\mu_1} V_\kappa, \cdots, \partial_{y_K}^T\partial_{\mu_K} V_\kappa]([\overline {\bf G}^\dag, \overline {\bf A}^\dag ]\xi_0 +\overline {\bf m}^\dag) \nonumber \\
&=2(z_\kappa^T {\bf P}^\dag_{\kappa, 13} + x_0^T{\bf P}^\dag_{\kappa, 23}+  \bar z^T{\bf P}^\dag_{\kappa, 33} +{\bf s}^{\dag T}_{\kappa,3}  )([\overline {\bf G}^\dag, \overline {\bf A}^\dag ]\xi_0 +\overline {\bf m}^\dag),\notag
\end{align}
and by Lemma \ref{lemma:dmuyy},
$\chi_{10}=0$.

By the above calculations,
the right hand sides of \eqref{ME1}--\eqref{ME2} may be written as
\begin{align}
&\chi_0=\xi_0^T \Theta_{0}(t) \xi_0+ 2\xi_0^T\theta_{0,1}(t)+\theta_{0,2}(t), \label{chi0T}  \\
&\chi= \xi_\kappa^T \Theta_{\kappa}(t) \xi_\kappa+ 2\xi_\kappa^T\theta_{\kappa, 1}(t)+\theta_{\kappa, 2}(t),\label{chikT}
\end{align}
where
\begin{align}
&\Theta_0={\bf P}_0^\dag
\begin{bmatrix}
A_0 &F_0^\pi\\
\overline {\bf G}^\dag & \overline {\bf A}^\dag
\end{bmatrix}
+ \begin{bmatrix}
A_0 &F_0^\pi\\
\overline {\bf G}^\dag & \overline {\bf A}^\dag
\end{bmatrix}^T {\bf P}_0^\dag -{\bf P}_0^\dag {\mathbb B}_0R_0^{-1}{\mathbb B}_0^T {\bf P}_0^\dag+{\mathbb Q}_0^\pi, \notag  \\  %%%%
&\theta_{0,1}=
\begin{bmatrix}
A_0 & F_0^\pi \\
\overline {\bf G}^\dag & \overline {\bf A}^\dag
\end{bmatrix}^T {\bf s}_0^\dag
-\begin{bmatrix}
{\bf P}_{0,11}^\dag \\
{\bf P}_{0,21}^\dag
\end{bmatrix}B_0 R_0^{-1} B_0^T {\bf s}_{0,1}^\dag
+\begin{bmatrix}
{\bf P}_{0,12}^\dag \\
{\bf P}_{0,22}^\dag
\end{bmatrix}\overline {\bf m}^\dag
 -  \bar \eta_0^\pi, \nonumber
\end{align}
and
\begin{align}
&\Theta_\kappa = {\bf P}^\dag_\kappa {\mathbb A}^\dag_\kappa  + {\mathbb A}_\kappa^{\dag T} {\bf P}^\dag_\kappa
 - {\bf P}^\dag_\kappa {\mathbb B}R^{-1}{\mathbb B}^T{\bf P}^\dag_\kappa+{\mathbb Q}^\pi,  \notag \\
&\theta_{\kappa,1} =({\mathbb A}_\kappa^{\dag T}  - {\bf P}^\dag_\kappa {\mathbb B}R^{-1}{\mathbb B}^T ){\bf s}^\dag_\kappa
  -
 \begin{bmatrix}
{\bf  P}^\dag_{\kappa,12}\\
{\bf  P}^\dag_{\kappa,22}\\
{\bf  P}^\dag_{\kappa,32}
 \end{bmatrix}\! B_0 R_0^{-1} B_0^T {\bf s}^\dag_{0,1}
 + \begin{bmatrix}
{\bf  P}^\dag_{\kappa,13}\\
{\bf  P}^\dag_{\kappa,23}\\
{\bf  P}^\dag_{\kappa,33}
 \end{bmatrix}\overline {\bf m}^\dag-  \bar \eta^\pi .\notag
\end{align}
The two terms $\theta_{0,2}(t)$ and $\theta_{\kappa,2}(t)$
can be determined but we omit the detail here.

Now if $(V_0, \cdots ,V_K)$ in \eqref{qs0}--\eqref{qsk} is indeed a solution of the master equations, we necessarily have
\begin{align}
\rho {\bf P}^\dag_0-\dot{\bf P}^\dag_0 = \Theta_0, \qquad \rho {\bf P}^\dag_\kappa
-\dot{\bf P}^\dag_\kappa = \Theta_\kappa,\quad 1\le \kappa \le K.\notag
\end{align}
Hence, \eqref{qsP0} and \eqref{qsPk} hold with the corresponding terminal conditions. In addition, $({\bf s}^\dag_0, {\bf s}^\dag_1, \cdots, {\bf s}^\dag_K)$ satisfies \eqref{qss0}--\eqref{qssk} as its unique solution.

Conversely, if \eqref{qsP0}--\eqref{qsPk} has a solution $( {\bf P}^\dag_0,\cdots, {\bf P}^\dag_K)$, we further uniquely solve $({\bf s}^\dag_0, {\bf s}^\dag_1, \cdots, {\bf s}^\dag_K)$ from \eqref{qss0}--\eqref{qssk}, and next solve
\begin{align}
\rho {\bf r}^\dag_0-\dot{\bf r}^\dag_0 = \theta_{0,2}, \qquad \rho {\bf r}^\dag_\kappa-\dot{\bf r}^\dag_\kappa
 = \theta_{\kappa,2},\quad 1\le \kappa \le K,\notag
\end{align}
where ${\bf r}^\dag_0(T)= \eta_{0f}^T Q_{0f}\eta_{0f}$ and  $  {\bf r}^\dag_\kappa(T)=\eta_{f}^T Q_f\eta_{f}$.
By the relation \eqref{chi0T}--\eqref{chikT}, then $(V_0, \cdots, V_K)$ given by \eqref{qs0}--\eqref{qsk} is a solution to the master equations. This completes  the proof of the theorem.

\end{document}